\theoremstyle{definition}
\newtheorem{theorem}{Theorem}[section]
\newtheorem{remark}[theorem]{Remark}
\newtheorem{prop}[theorem]{Proposition}
\newtheorem{corollary}{Corollary}
\newcounter{sarrow}
\title{Foam cobordism and the Sah-Arnoux-Fathi invariant}
\author{Mee Seong Im}
\address{Department of Mathematics, United States Naval Academy, Annapolis, MD 21402, USA}
\email{\href{mailto:meeseongim@gmail.com}{meeseongim@gmail.com}}
\author{Mikhail Khovanov} 
\address{Department of Mathematics, Columbia University, New York, NY 10027, USA}
\email{\href{mailto:khovanov@math.columbia.edu}{khovanov@math.columbia.edu}}
\address{Department of Mathematics, Johns Hopkins University, Baltimore, MD 21218, USA}
\email{\href{mailto:khovanov@jhu.edu}{khovanov@jhu.edu}}
\subjclass[2020]{Primary: 37E05, 37E99, 18M30, 19D99.}
\date{March 9, 2024}
\providecommand{\keywords}[1]{\textbf{\textit{Key words and phrases.}} #1}
\keywords{Foams, foam cobordism, Sah-Arnoux-Fathi invariant, interval exchange transformations, train tracks}
\begin{document}

\def\front{\mathsf{front}}
\def\back{\mathsf{back}}

\def\LHS{\mathsf{LHS}}
\def\RHS{\mathsf{RHS}}
\def\up{\mathsf{up}}
\def\E{\mathsf E}
\def\I{\mathsf I}
\def\R{\mathbb R}
\def\Q{\mathbb Q}
\def\Z{\mathbb Z}
\def\N{\mathbb N} 
\def\C{\mathbb C}
\def\S{\mathbb S}
\def\SS{\mathbb S} 
\def\GL{\mathsf{GL}} 

\newcommand{\dmod}{\mathsf{-mod}}
\newcommand{\comp}{\mathrm{comp}} 
\newcommand{\col}{\mathrm{col}}
\newcommand{\adm}{\mathrm{adm}}  
\newcommand{\Ob}{\mathrm{Ob}}
\newcommand{\Cob}
{\mathsf{Cob}}
\newcommand{\ECob}{\mathsf{ECob}}
\newcommand{\indexw}{\R_{>0}}
\newcommand{\indexww}{>0}
\newcommand{\CoboR}{\Cob^1_{\indexww}}
\newcommand{\CoboRz}
{\Cob^0_{\indexww}}
\newcommand{\CoboRf}
{\Cob^{1,f}_{\indexww}}
\newcommand{\id}{\mathsf{id}}
\newcommand{\undM}{\underline{M}}
\newcommand{\im}{\mathsf{im}}
\newcommand{\coker}{\mathsf{coker}}
\newcommand{\Aut}{\mathsf{Aut}}
\newcommand{\tripod}{\mathsf{Td}}
\newcommand{\scrX}{\mathscr{X}}
\newcommand{\scrY}{\mathscr{Y}}
\newcommand{\rmH}{\mathrm{H}}
\newcommand{\disk}{\mathbb{D}^2}
\newcommand{\Ztwo}{Z^2}  

\def\l{\lbrace}
\def\r{\rbrace}
\def\o{\otimes}
\def\lra{\longrightarrow}
\def\lla{\longleftarrow}
\def\Ext{\mathsf{Ext}}
\def\ker{\mathsf{ker}}
\def\mf{\mathfrak} 
\def\mcC{\mathcal{C}}
\def\mcS{\mathcal{S}}  
\def\mcQC{\mathcal{QC}}
\def\mcA{\mathcal{A}}
\def\mcE{\mathcal{E}}
\def\Fr{\mathsf{Fr}}  

\def\bbn{\mathbb{B}^n}
\def\ovb{\overline{b}}
\def\tr{{\sf tr}} 
\def\det{{\sf det }} 
\def\one{\mathbf{1}}   
\def\kk{\mathbf{k}}  
\def\gdim{\mathsf{gdim}}  
\def\rk{\mathsf{rk}}
\def\IET{\mathsf{IET}}
\def\SAF{\mathsf{SAF}}
\newcommand\vspsm{\vspace{0.07in}}

\newcommand{\brak}[1]{\ensuremath{\left\langle #1\right\rangle}}
\newcommand{\oplusop}[1]{{\mathop{\oplus}\limits_{#1}}}
\newcommand{\addfigure}{\vspace{0.1in} \begin{center} {\color{red} ADD FIGURE} \end{center} \vspace{0.1in} }
\newcommand{\add}[1]{\vspace{0.1in} \begin{center} {\color{red} ADD FIGURE #1} \end{center} \vspace{0.1in} }
\newcommand{\vspin}{\vspace{0.1in} }

\newcommand\circled[1]{\tikz[baseline=(char.base)]{\node[shape=circle,draw,inner sep=1pt] (char) {${#1}$};}} 

\let\oldemptyset\emptyset
\let\emptyset\varnothing

\let\oldtocsection=\tocsection
\let\oldtocsubsection=\tocsubsection
\renewcommand{\tocsection}[2]{\hspace{0em}\oldtocsection{#1}{#2}}
\renewcommand{\tocsubsection}[2]{\hspace{1em}\oldtocsubsection{#1}{#2}}

\def\MK#1{{\color{red}[MK: #1]}}
\def\bfred#1{{\color{red}#1}}


\begin{abstract} This is the first in a series of papers where scissor congruence and K-theoretical invariants are related to cobordism groups of foams in various dimensions. A model example is provided where the cobordism group of weighted one-foams is identified, via the Sah--Arnoux--Fathi invariant, with the first homology of the group of interval exchange automorphisms and with the Zakharevich first K-group of the corresponding assembler. Several variations on this cobordism group are computed as well.
\end{abstract}

\maketitle

\tableofcontents

%
%
    
\section{Introduction}
\label{sec_intro} 

In link homology by a \emph{foam} one usually means a 2-dimensional finite combinatorial CW-complex $F$, often embedded in $\R^3$, where each point has one of the three types of neighborhoods shown in Figure~\ref{fig3_006} below.  Foams are used in algebraically-defined link homology to build state spaces of planar graphs which are then combined into complexes that define homology of a link~\cite{Kh1,MV1,RWd,RW1,KK}. Foams also appear in Kronheimer--Mrowka instanton Floer homology for 3-orbifolds~\cite{KrMr}. 

Locally, the foam structure is that of a two-dimensional spine of a 3-manifold. 
Often, foams come 
with extra decorations, such as orientations, weights and other labels on facets.

In this paper a \emph{closed two-foam} means a foam as above, with additional decorations specified. More generally, one can define a \emph{two-foam with boundary}, the boundary being a one-foam. A \emph{one-foam} is a finite graph, possibly with loops and circle edges without vertices, and additional decorations. Splitting the boundary of a two-foam $F$ into two disjoint sets of components, $\partial F = (- U_0)\sqcup U_1$, allows to view $F$ as a cobordism between one-foams $U_0$ and $U_1$. 
Decorations of $U_0,U_1$ are induced from those of $F$.  

\vspace{0.07in} 

This paper is the first in a series of papers which aim to use foams, in all dimensions $n$ and with additional decorations, to understand K-theoretical structures. One expects that $n$-dimensional foams decorated by objects and morphisms of an exact category $\mcC$, modulo concordances which are $\mcC$-decorated $(n+1)$-dimensional foams, carry information about the $n$-th K-theory group $K_n(\mcC)$ of $\mcC$. 
Facets, respectively, seams of a foam are decorated by flat connections with objects of $\mcC$, respectively short exact sequences of $\mcC$,  as fibers of these flat bundles. This relation between decorated foams and algebraic K-theory will be studied in forthcoming papers.

\vspsm

The present paper works out a straightforward example of this correspondence,   
where the abelian group of suitably decorated one-dimensional foams modulo 2-dimensional cobordisms is identified with the group $\R\wedge_{\Q} \R$, which is the first homology of the group of interval exchange transformations~\cite{Vor11}.  The related invariant of interval exchange transformations mapping a group element to its image in the first homology is known as the Sah--Arnoux--Fathi invariant, or $\SAF$ invariant, for short~\cite{Ve,DS16}.  I.~Zakharevich interpreted the $\SAF$ invariant map via the $K_1$ group of a suitable \emph{assembler} category~\cite{Za1,Za2}, and that category plays the role of the exact category $\mcC$ above. 
In the present paper, we relate these structures to two-dimensional cobordisms between decorated one-dimensional foams. 

In Section~\ref{sec_iet} we work out this new interpretation of the $\SAF$ invariant, as classifying elements of the cobordism group of \emph{weighted} oriented one-foams.  In this construction, edges of an oriented one-foam are decorated by positive real numbers $a$, with compatibility relations on these numbers at vertices. The cobordism group of such foams is identified with the abelianization of the group of interval exchange transformations ($\IET$s) in Theorem~\ref{thm_saf}. The isomorphism uses the Sah--Arnoux--Fathi invariant of $\IET$s, extended to arbitrary weighted oriented one-foams. 

 Section~\ref{subsec_planar} considers the cobordism group of planar unoriented weighted 1-foams  and identifies it with the abelian group generated  by brackets $[a,b]$ with $a,b\in \R_{>0}$ modulo the antisymmetry and the 2-cocycle relations \eqref{eq_rel_a}-\eqref{eq_rel_c}. It also looks at a variation on weighted embedded foams, where each facet may carry either a positive or a negative weight. 
Several other variations on the group of foam cobordisms are studied in Section~\ref{sec_variations}.
Constructions of  Sections~\ref{sec_iet} and~\ref{subsec_planar} can perhaps be viewed as first steps exploring the relation between foam cobordisms and dynamical systems. 
   
\vspace{0.1in}  
  
{\bf Acknowledgments:} The authors are grateful to David Gepner, Nitu Kitchloo, and Inna Zakharevich for interesting discussions. M.K. would like to acknowledge partial support from NSF grant DMS-2204033 and Simons Collaboration Award 994328.

%
%

\section{Foams and interval exchange transformations} 
\label{sec_iet}

In this section we interpret the Sah--Arnoux--Fathi invariant of interval exchange transformations~\cite{Ve,Vor11,DS16}  via cobordism classes of oriented one-foams with facets decorated by positive real numbers (called \emph{weighted} or \emph{$\R_{>0}$-decorated} one-foams). 

\vspace{0.07in} 


\subsection{Oriented 1-foams and 2-foams and cobordisms between 1-foams}
\label{subsec_oriented} 

\quad

In this paper, a \emph{closed 2-foam} denotes a finite combinatorial CW-complex $F$, where each point is one of the three types and has a neighborhood as depicted in Figure~\ref{fig3_006}. These points are called \emph{regular points}, \emph{seam points} and \emph{vertices} of the 2-foam, respectively. The union of seams and vertices of $F$ is a four-valent graph $s(F)$, possibly with loops and verticeless circles. 
Connected components of $F\setminus s(F)$ are called \emph{facets} of $F$, and $s(F)$ is called the set of \emph{singular points} of $F$.

\vspsm 

\input{fig3_006}

\input{fig3_005}

A closed 2-foam is \emph{oriented} if 
\begin{itemize}
\item Each facet is oriented, so that along seams and near vertices orientations match as shown in Figure~\ref{fig3_005} (for seams) and Figure~\ref{fig3_008} on the right (for vertices). Along each seam, two of the facets are designated as \emph{thin} and the remaining one as \emph{thick}. Orientation of each thin facet matches (flows into) the orientation of the thick facet. Orientations of the two thin facets along a seam are opposite. 
A facet which is thin at one of its seams may be thick at another seam.
\item   An order of two thin facets along each seam is fixed (shown by small curly arrow from one thin facet to the other in Figure~\ref{fig3_005}. 
\item At each vertex of the foam,  decorations (orientations and orders) of the six adjoint facets along the four seams match as follows (and shown in Figure~\ref{fig3_008} on the right). The six facets are labelled $f_1,f_2,f_3,f_{12},f_{13},f_{123}$. Among the triples of facets $(f_1,f_2,f_{12}), (f_2,f_3,f_{23}), (f_{12},f_3,f_{123}), (f_1,f_{23},f_{123})$, one triple for each seam, the first two facets are thin and the last one is thick. The facets are oriented either as shown in Figure~\ref{fig3_008} on the right or with all orientations opposite (which follows from the orientation requirements along the seams). Orders of facets along the seams are as shown in Figure~\ref{fig3_008} on the left, in the direction of increasing indices, or the opposite (decreasing indices). 
\end{itemize} 

\input{fig3_008}

\input{fig3_009}

Figure~\ref{fig3_009} shows a set of three ``parallel cross-sections'' of a foam near a vertex, with one of the cross-sections going through the vertex. Figure~\ref{fig9_006} depicts a neighbourhood of a vertex taking ``tangencies'' of thin facets along the four seams near the vertex into account, analogous to that of a vertex in a branched surface~\cite[Figure 1.1]{Oer88}, see also~\cite{QW22}. (For now, ignore the weights of facets in Figure~\ref{fig9_006}.)

\vspsm 

\input{fig9_006}

\vspsm 

One-foams can be thought of as generic cross-sections of two-foams. A one-foam is a finite oriented trivalent graph. At each vertex there are two \emph{in} edges and one \emph{out} edge or vice versa. We call these \emph{merge} and \emph{split} vertices, correspondingly. They are shown in Figure~\ref{fig6_001} on the left (ignoring the weights $a,b,a+b$ in that figure). An oriented circle with no vertices on it is allowed as a component of a one-foam. Loops are allowed, although we will not encounter them due to working with \emph{weighted} foams only.  

It is convenient to visualize thin edges at a merge vertex as sharing a tangent line at a vertex and think of a neighbourhood of a merge vertex as a generic cross-section across the seam of the Figure~\ref{fig3_005} foam.  Likewise, a neighbourhood of a split vertex of a 1-foam can be visualized as a horizontal cross-section of the rightmost foam in Figure~\ref{fig6_001}. 
 Similar conventions are used in~\cite{RW2, QW22}.

\vspsm 

We define an oriented  two-foam $F$ with boundary as a cobordism between oriented one-foams $U_0,U_1$. The boundary of $F$ is split into two disjoint 1-foams,  
\[\partial F \cong (-\partial_0 F) \sqcup \partial_1 F \cong
U_1\sqcup (-U_0).
\]
Away from the boundary $F$ has  local structure that of an oriented 2-foam and collar neighbourhoods 
near $U_i$, $i=0,1,$ where it is homeomorphic to the product $U_i\times [0,\epsilon)$, $\epsilon>0$. Orientations of facets of $F$ and local orders of thin facets along the seams of $F$ restrict to orientations of edges of its boundary 1-foams and local orders of thin edges at vertices of boundary 1-foams using the standard convention for induced orientation of the boundary of a manifold. 

\vspsm

For completeness, we mention that an \emph{oriented $0$-foam} is a finite collection of points with orientations  (signs $+$ and $-$).  
It is clear how to define oriented $1$-foams with boundary.


\subsection{Weighted or \texorpdfstring{$\R_{>0}$}{Rgreaterthan0}-decorated foams}\label{subsec_weighted}

Consider oriented one-foams and two-foams with edges (for 1-foams) and facets (for 2-foams) decorated by real numbers $a$  for various $a>0$ and refer to $a$ as the \emph{thickness}, \emph{width}, or \emph{label} of the facet. At a vertex of a one-foam and a seam of a two-foam widths must add as shown in Figure~\ref{fig6_001}. Informally, one can ``thicken'' the foams  and think of  intervals $[0,a)$ and $[0,b)$ merging into the interval $[0,a+b)=[0,a)\sqcup [a,a+b)$ at a vertex of a 1-foam and a seam of a 2-foam. The order of thin edges near a vertex (for 1-foams) and order of thin 2-facets near a seam (for 2-foams) matches the order of intervals in the merge, see Figure~\ref{fig6_001}. 

\input{fig6_001}

\vspace{0.07in} 

At a vertex of a decorated 2-foam, three thin facets of thickness $a_1,a_2,a_3$  merge into facets of thickness $a_1+a_2$ and $a_2+a_3$, which then merge with the remaining thin facet into the facet of thickness $a_1+a_2+a_3$, see Figure~\ref{fig6_002}, which also shows three parallel cross-sections of this foam.  
\input{fig6_002}

\begin{remark}
If desired, one may allow lines and facets to carry the empty interval $[0,0)$, but this does not seem essential. Such lines and facets can then be deleted from a foam. 
\end{remark}

We call such foams \emph{weighted foams} or \emph{$\R_{>0}$-decorated foams} or \emph{$\IET$-foams} (see later). The definition is straightforward to extend to all dimensions. 
A weighted $0$-foam is a finite set of points with signs $\{+,-\}$ and weights $a>0$.  

Figure~\ref{fig3_019} shows the link of a vertex of a weighted 2-foam. Weighted 2-foams are analogous to measured branched surfaces and measured laminations~\cite{Oer84,Oer88}, but without an embedding into a 3-manifold. 

For $n=0,1$ denote by $\Cob^n_{\indexww}$ the cobordism group of weighted oriented $n$-foams. An $n$-foam $U$ defines the trivial element $[U]=0\in {\Cob^n_{\indexww}}$ iff it bounds a weighted oriented $(n+1)$-foam. 

\input{fig3_019}

\begin{prop}
    The cobordism group of weighted oriented 0-foams is isomorphic to $\R$: 
    \begin{equation}\label{R-iso}
        \CoboRz \ \cong \R. 
    \end{equation}
\end{prop}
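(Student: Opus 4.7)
I would define a map $\phi : \CoboRz \to \R$ by sending a weighted oriented $0$-foam $U$ with signed points $(\epsilon_i, a_i)$, $\epsilon_i \in \{\pm 1\}$, $a_i > 0$, to the signed sum of weights $\phi(U) := \sum_i \epsilon_i a_i$. The assignment is additive under disjoint union and negated by orientation reversal, so once well-definedness on cobordism classes is verified it will automatically be a group homomorphism.

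Well-definedness reduces to showing $\phi(\partial F) = 0$ for every weighted oriented $1$-foam $F$. I would verify this by a discrete divergence argument: form the element $\sum_e a_e\bigl(\text{end}(e) - \text{start}(e)\bigr)$ in the free $\R$-module on the vertices of $F$. Its total coefficient, computed edge-by-edge, is obviously zero. Grouping instead by vertex, the contribution at each interior merge or split vertex vanishes by the weight-additivity axiom at trivalent vertices (Figure~\ref{fig6_001}), while the contribution at each boundary point $p$ is $\epsilon_p a_p$. Hence $0 = \sum_{p \in \partial F} \epsilon_p a_p = \phi(\partial F)$. Surjectivity is immediate: any $x > 0$ is realized by a single $+$ point of weight $x$, any $x < 0$ by a single $-$ point of weight $-x$, and $0$ by the empty $0$-foam.

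The main work is injectivity. Given $U$ with $\phi(U) = 0$, I would construct an explicit weighted oriented $1$-foam with boundary $U$. Label the $+$ points of $U$ as $P_1,\dots,P_k$ with weights $p_1,\dots,p_k$ and the $-$ points as $N_1,\dots,N_m$ with weights $n_1,\dots,n_m$, so that $\sum_i p_i = \sum_j n_j$. By a classical transportation argument (e.g.\ the northwest-corner rule), choose nonnegative reals $q_{ij}$ with row sums $\sum_j q_{ij} = p_i$ and column sums $\sum_i q_{ij} = n_j$. For each pair $(i,j)$ with $q_{ij} > 0$, introduce an oriented edge of weight $q_{ij}$ directed from the $N_j$-side to the $P_i$-side; at each $P_i$ resolve the multi-valent terminus into a binary tree of trivalent merge vertices whose final outgoing edge of weight $p_i$ ends at $P_i$, and symmetrically at each $N_j$ use a binary tree of split vertices whose initial incoming edge of weight $n_j$ starts at $N_j$. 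Weight-additivity holds at every interior vertex by construction, and the induced boundary orientation at each $P_i$ (resp.\ $N_j$) is $+$ (resp.\ $-$).

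The principal obstacle is not conceptual but is a careful bookkeeping of orientations in the last step: fixing the convention that edges end at $+$ boundary points and start at $-$ boundary points (in accordance with the induced boundary orientation $\partial[0,1]=\{1\}-\{0\}$), and correspondingly checking that the tree at each $P_i$ uses merge vertices while the tree at each $N_j$ uses split vertices. Once this convention is pinned down, the combinatorics are routine and the proposition follows.
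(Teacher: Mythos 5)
Your proposal is correct, and both the map and the overall structure (define $\phi$, check well-definedness, check surjectivity and injectivity) line up with the paper's intent, but you take a different route on the nontrivial steps. The paper proves injectivity by first reducing any weighted oriented $0$-foam to a normal form: merge all $+$-points into one and all $-$-points into one using trees of trivalent vertices, then cancel the resulting pair $(+,a),(-,b)$ via a single edge, landing on $(+,a-b)$, $(-,b-a)$, or the empty foam (Figure~\ref{fig9_007}); well-definedness of $\phi$ is left implicit, since each of these moves visibly preserves the signed weight sum. You instead verify well-definedness explicitly via a discrete divergence argument (a clean observation that the paper does not spell out), and for injectivity you construct a null-cobordism of any $U\in\ker\phi$ directly, routing each positive point to the negative points via a transportation plan $q_{ij}$ and resolving the multi-valent nodes into binary merge/split trees. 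Both constructions are correct; yours is more symmetric but invokes a transportation lemma where the paper's simpler ``funnel everything to one hub of each sign'' achieves the same effect with less machinery (your construction reduces to the paper's when you pick the degenerate transportation plan that passes everything through a single intermediate weight). Your orientation bookkeeping is also consistent with the paper's convention $\partial F\cong(-U_0)\sqcup U_1$, provided you are careful that the $\varepsilon_p$ in your divergence identity are the \emph{induced} boundary signs of $F$ rather than the intrinsic signs of $U_0$ and $U_1$ -- this is exactly what gives $\phi(U_1)-\phi(U_0)=0$ rather than $\phi(U_1)+\phi(U_0)=0$.
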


\begin{proof}
    A weighted oriented 0-foam is given by a finite collection of points decorated by signs and weights $a>0$. 
    Merge all $+$-decorated points into one point (adding the weights) and all $-$-decorated points into a point (adding the weights). The result is at most two points $(+,a), (-,b)$, which are cobordant to $(+,a-b)$ if $a>b$, $(-,b-a)$ if $a<b$, and to the empty $0$-foam if $a=b$, see Figure~\ref{fig9_007}.

    \vspsm 

\input{fig9_007}

    \vspsm 

     Under the isomorphism in \eqref{R-iso} point $(+,a)$, $a\in \R_{>0}$ is sent to $a\in \R$,  point $(-,a)$ is sent to $-a\in \R$, and the disjoint union of signed decorated points is converted to the sum of corresponding numbers.
\end{proof}


\subsection{Interval exchange transformations and \texorpdfstring{$\R_{>0}$}{R>0}-decorated one-foams} \label{subsec_IET_foams}

Pick $r\ge 1$, a decomposition $1=\sum_{i=1}^r \lambda_i,$ $0<\lambda_i<1, \lambda_i\in \R$ and a permutation $\sigma\in S_r$. Interval exchange transformation $T_{\lambda,\sigma}: [0,1)\lra [0,1)$ is a bijection of a semiclosed interval to itself given by writing it as the disjoint union of $r$ intervals 
\[
[0,1) = [0,\lambda_1)\sqcup [\lambda_1,\lambda_1+\lambda_2)\sqcup \ldots \sqcup [1-\lambda_r,1)  
\]
and permuting the order of intervals according to $\sigma$, making the $i$-th interval $\sigma(i)$-th in the order. 

The Sah-Arnoux--Fathi invariant of $T_{\lambda,\sigma}$ is an element of $\R\otimes_{\Q}\R$ given by 
\begin{equation}\label{eq_saf}   
\SAF(T_{\lambda,\sigma}) := 
\sum_{i=1}^r \lambda_i\otimes t_i = \sum_i\left(\sum_{j:\sigma(j)<\sigma(i)} \lambda_i\otimes \lambda_j - \sum_{j<i}\lambda_i\otimes \lambda_j\right) , 
\end{equation}
where $t_i=\sum_{j:\sigma(j)<\sigma(i)} \lambda_j - \sum_{j<i}\lambda_j\in \R$ is the displacement of the $i$-th interval by $\sigma$. 

One can write $\SAF(T_{\lambda,\sigma})$ as a linear combination of elements $\lambda_i\otimes \lambda_j-\lambda_j\otimes \lambda_i$, $i,j\le r$ and view it as an element of $\R\wedge_{\Q}\R= \Lambda^2_{\Q}(\R)$, the quotient of $\R\otimes_{\Q}\R$ by the abelian subgroup spanned by $\lambda\otimes \lambda, \lambda\in \R$. Note that $\R\otimes_{\Q}\R\cong \Lambda^2_{\Q}(\R) \oplus S^2_{\Q}(\R)$, the sum of symmetric and exterior squares, and one is taking the projection onto the first summand. The invariant can also be written as follows: 
\begin{equation}
    \SAF(T_{\lambda,\sigma}) \ = \ 2 \sum_{i<j:\sigma(j)<\sigma(i)} \lambda_i \wedge \lambda_j, 
\end{equation}
where $a\wedge b$ denotes the image of $a\otimes b$ under the quotient map $q:\R\otimes_{\Q}\R\lra \Lambda^2_{\Q}\R$, since $q(a\otimes b-b\otimes a)=2 a\wedge b$.   

Let $\Aut_{\IET}$ be the group of Interval Exchange Transformations of $[0,1)$, that is, the group of bijections $T_{\lambda,\sigma}$ as above, with the group operation given by the composition of maps.  
There is a short exact sequence of groups 
\begin{equation}\label{ses_iet}
1 \lra [\Aut_{\IET},\Aut_{\IET}]\lra \Aut_{\IET}\stackrel{\SAF}{\lra} \Lambda^2_{\Q}\R \lra 1 .
\end{equation}

\begin{remark}
I.~Zakharevich~\cite{Za2} interpreted the Sah--Arnoux--Fathi invariant as describing $K_1$ of an appropriate assembler category. Combining this result with constructions of the present paper yields an example of the relation between $K_1$ group of an appropriate category and the group of 1-foam cobordisms, in a rather special case. 
In a forthcoming paper we will discuss the relation between the $K_1$ group and cobordism group of decorated 1-foams in greater generality.  
\end{remark}

\begin{remark}
To each interval exchange transformation  $T_{\lambda, \sigma}$ as earlier, we assign a weighted one-foam with boundary $F_{\lambda,\sigma}$ and a closed weighted one-foam $\widehat{F}_{\lambda,\sigma}$, as shown in Figure~\ref{fig6_010}. Start with a line of thickness $1$ and split it into lines of thickness $\lambda_1,\dots, \lambda_r$ from left to right. Then permute the points at the top end of the split via $\sigma$. After that, merge the resulting points into an interval of width 1, and close up top and bottom endpoints, both of thickness 1, into a closed diagram. Denote by $F_{\lambda,\sigma}$ the resulting weighted oriented one-foam with boundary and by $\widehat{F}_{\lambda,\sigma}$ its closure. Intersections in Figure~\ref{fig6_010} are virtual, that is, due to having to depict foam via a projection to the plane. 

\vspace{0.07in}

\input{fig6_010}

Notice that, in the cobordism group $\CoboR$, one-foam $\widehat{F}_{\lambda,\sigma}$ does not depend on the sequence in which the interval 1 is split into $\lambda_1,\dots, \lambda_r$ as long as in the split $\lambda_1,\dots, \lambda_r$ go from left to right. For instance, for $r=3$, the two sequences of splits $1\to (\lambda_1,\lambda_2+\lambda_3)\to (\lambda_1,\lambda_2,\lambda_3)$ and 
$1\to (\lambda_1+\lambda_2,\lambda_3)\to (\lambda_1,\lambda_2,\lambda_3)$ give rise to cobordant foams. 
Likewise, the sequence of merging the intervals back is irrelevant, as long as the order from left to right is $\lambda_{\sigma^{-1}(1)},\dots, \lambda_{\sigma^{-1}(r)}$. The two one-foams that differ in that way are then cobordant via a composition of 2-foams that create vertices, see Figure~\ref{fig6_002}. 
Likewise, foam $F_{\lambda,\sigma}$, in the cobordism set of one-foams with a fixed boundary, does not depend on the order of merges and splits. 
\end{remark}

Composition of two $\IET$s $T_{\lambda,\sigma}$ and $T_{\lambda',\sigma'}$ is an $\IET$ $T_{\lambda'',\sigma''}=T_{\lambda',\sigma'}\circ T_{\lambda,\sigma}$ for suitable $(\lambda'',\sigma'')$. 

\begin{prop}\label{prop_hom}
Foams 
$\widehat{F}_{\lambda'',\sigma''}$ and $\widehat{F}_{\lambda,\sigma}\sqcup \widehat{F}_{\lambda',\sigma'}$ are cobordant. Assigning one-foam $\widehat{F}_{\lambda,\sigma}$ to an $\IET$ $T_{\lambda,\sigma}$ extends to a homomorphism of groups 
\begin{equation}
\phi' \ : \ \Aut_{\IET}\lra \CoboR.
\end{equation} 
\end{prop}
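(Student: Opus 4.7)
The plan is to introduce an auxiliary weighted oriented one-foam $\widehat{G}$ obtained by vertically stacking $F_{\lambda,\sigma}$ under $F_{\lambda',\sigma'}$: identify the top (weight $1$) boundary interval of the lower foam with the bottom (weight $1$) boundary of the upper foam, producing an interior edge $M$ of weight $1$, and close the two remaining weight-$1$ boundary intervals (the bottom of $F_{\lambda,\sigma}$ and the top of $F_{\lambda',\sigma'}$) by an external arc $E$ of weight $1$. Label the endpoints $p_1,p_2,p_3,p_4$ so that $M$ runs $p_1\to p_2$ and $E$ runs $p_3\to p_4$. The proposition will follow from the two cobordisms $\widehat{G}\sim \widehat{F}_{\lambda,\sigma}\sqcup \widehat{F}_{\lambda',\sigma'}$ and $\widehat{G}\sim \widehat{F}_{\lambda'',\sigma''}$.

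For the first cobordism I would perform a single saddle move on the parallel weight-$1$ edges $M$ and $E$. Both edges carry weight $1$, so the saddle is realized by a 2-foam with one facet of weight $1$ whose boundary is a 4-cycle through $p_1,p_2,p_3,p_4$; the move replaces $M$ and $E$ with two new weight-$1$ arcs connecting $p_1$ to $p_4$ and $p_3$ to $p_2$, which are precisely the closing arcs of $\widehat{F}_{\lambda,\sigma}$ and $\widehat{F}_{\lambda',\sigma'}$ respectively. Orientation checks are immediate: at each $p_i$ the incoming/outgoing status of the adjacent weight-$1$ edge is the same in $\widehat{G}$ as in the disjoint union, so the saddle facet inherits a coherent orientation consistent with the conventions of Figure~\ref{fig3_005}.

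For the second cobordism I would use a refinement-and-cancellation argument. Let $\lambda''$ be the common refinement of $\lambda$ with the pullback of $\lambda'$ under $T_{\lambda,\sigma}$. Using the remark just before the proposition — that within its cobordism class the order of splits and merges can be chosen freely — I would restructure the merges at the top of $F_{\lambda,\sigma}$ so that each permuted $\lambda_i$ is first further split into its $\lambda''$-subpieces before being merged into $M$, and symmetrically restructure the splits at the bottom of $F_{\lambda',\sigma'}$ so that $M$ is split directly into the same $\lambda''$-subpieces in matching order. The middle region now consists of matched merge-then-split bigons, one per $\lambda''$-piece, each cancellable by the standard bigon-removal cobordism. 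What remains is a single split of weight $1$ into the $\lambda''$-pieces, a permutation, a single merge back, and the external arc $E$; by construction the composite permutation is $\sigma''$, so the resulting foam is $\widehat{F}_{\lambda'',\sigma''}$.

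Concatenating the two cobordisms yields $[\widehat{F}_{\lambda'',\sigma''}]=[\widehat{F}_{\lambda,\sigma}]+[\widehat{F}_{\lambda',\sigma'}]$ in the abelian group $\CoboR$, which is exactly the homomorphism property for $\phi'(T_{\lambda,\sigma}):=[\widehat{F}_{\lambda,\sigma}]$. The main technical obstacle is verifying that the elementary moves used in the refinement step — bigon insertion and bigon removal — lift to genuine oriented weighted 2-foam cobordisms with correct thin/thick designations and local orderings at every new seam, in accordance with Figures~\ref{fig3_005} and~\ref{fig3_008}. Since the preceding remark already affirms the analogous invariance for reorderings of splits and merges, this verification should be routine but does need to be spelled out carefully.
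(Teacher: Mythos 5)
Your proof takes essentially the same route as the paper's: the auxiliary foam $\widehat{G}$ you construct is precisely the connected foam appearing as the intermediate stage of Figure~\ref{fig6_011}, your saddle move on $M$ and $E$ realizes the paper's step of ``merging $\widehat{F}_{\lambda,\sigma}\sqcup\widehat{F}_{\lambda',\sigma'}$ into a connected foam,'' and your refinement-and-bigon-cancellation argument is exactly the ``foam concordance between braid-like foams'' the paper invokes (the bigon removal being the singular cap of Figure~\ref{fig6_008}, and the freedom to restructure merge/split trees being the remark before the proposition). You simply spell out what the paper compresses into two sentences and a picture. One small notational imprecision: after the permutation $\sigma$ the correct common refinement of the target side is $T_{\lambda,\sigma}(\lambda'')$ rather than $\lambda''$ itself; calling both ``$\lambda''$-subpieces'' works only because $T_{\lambda,\sigma}$ is a piecewise isometry that carries one decomposition to the other, and it would be cleaner to distinguish them when matching the merge and split trees at $M$.
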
 
This is proved by merging $\widehat{F}_{\lambda,\sigma}\sqcup \widehat{F}_{\lambda',\sigma'}$ into a connected foam and converting it to $\widehat{F}_{\lambda'',\sigma''}$ via foam concordance between braid-like foams, see Figure~\ref{fig6_011}. The rules for computing the composition $T_{\lambda',\sigma'}\circ T_{\lambda,\sigma}$ are easy to translate to a composition of concordances between these two 1-foams. 

\input{fig6_011}

$\square$

\subsection{Cobordism group of weighted oriented one-foams}\label{subsec_cob_weighted}

Since the cobordism group is abelian, homomorphism $\phi'$ factors modulo the commutator of the automorphism group, giving a homomorphism
\begin{equation}\label{phi_cob}
    \phi \ : \ \rmH_1(\Aut_{\IET},\Z)\lra \CoboR .
\end{equation}
Figure~\ref{fig7_003} shows a cobordism from a commutator of two elements to the identity (or to the empty 1-foam). 

\vspace{0.1in}

\input{fig7_003}

\begin{theorem}\label{thm_saf}
    Homomorphism $\phi$ in \eqref{phi_cob} is an isomorphism of abelian groups, giving isomorphisms 
    \begin{equation}
        \CoboR \ \cong \ \R\wedge_{\Q}\R\ \cong \ \rmH_1(\Aut_{\IET},\Z)\   \cong \ K_1(\mcC_Z). 
    \end{equation}
\end{theorem}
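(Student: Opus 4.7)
The plan is to construct an inverse to $\phi$ by extending the $\SAF$ invariant to arbitrary closed weighted oriented 1-foams. The identification $\rmH_1(\Aut_{\IET},\Z)\cong \R\wedge_{\Q}\R$ is immediate from the short exact sequence \eqref{ses_iet}, while $\rmH_1(\Aut_{\IET},\Z)\cong K_1(\mcC_Z)$ is Zakharevich's theorem~\cite{Za2}. It therefore suffices to prove that $\phi$ is an isomorphism, which I plan to do by constructing a homomorphism $\psi:\CoboR\to \R\wedge_{\Q}\R$ satisfying $\psi\circ\phi=\SAF$ and then proving that $\phi$ is surjective.

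To define $\psi$, note that a closed weighted oriented 1-foam $U$ is a disjoint union of verticeless oriented circles together with trivalent vertices, each either a \emph{merge} with ordered thin in-edges $(a_v,b_v)$ and thick out-edge of weight $a_v+b_v$, or a \emph{split} with ordered thin out-edges $(a_v,b_v)$. Setting $\epsilon_v=+1$ at a split and $\epsilon_v=-1$ at a merge, put
\[
\psi(U) \ = \ \sum_{v\in\mathrm{Vert}(U)}\epsilon_v\, a_v\wedge b_v \ \in \ \R\wedge_{\Q}\R.
\]
Expanding by $\Q$-bilinearity of $\wedge$, the $r-1$ splits of $\widehat{F}_{\lambda,\sigma}$ contribute $\sum_{i<j}\lambda_i\wedge\lambda_j$, the $r-1$ merges contribute $\sum_{i<j,\,\sigma(j)<\sigma(i)}\lambda_i\wedge\lambda_j-\sum_{i<j,\,\sigma(i)<\sigma(j)}\lambda_i\wedge\lambda_j$, and the two sums combine to $2\sum_{i<j,\,\sigma(j)<\sigma(i)}\lambda_i\wedge\lambda_j=\SAF(T_{\lambda,\sigma})$ via \eqref{eq_saf}; hence $\psi\circ\phi=\SAF$.

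The two substantive steps are (i) cobordism-invariance of $\psi$, and (ii) surjectivity of $\phi$. For (i), I would check invariance under the local moves on 1-foams arising as horizontal cross-sections of the elementary pieces of a weighted 2-foam: birth/death of an adjacent merge-split pair (contributions cancel via $a\wedge b-a\wedge b=0$), isotopy of a vertex along a strand (no weight changes), and the 2-foam vertex move of Figure~\ref{fig6_002}, which replaces two consecutive splits $a+b+c\to(a+b,c)\to(a,b,c)$ by $a+b+c\to(a,b+c)\to(a,b,c)$ and changes $\psi$ by $a\wedge b+(a+b)\wedge c-a\wedge(b+c)-b\wedge c$, vanishing by bilinearity; analogous vertex moves involving merges are handled identically. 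For (ii), given a closed weighted oriented 1-foam $U$ (necessarily with equal numbers of merges and splits by edge-counting in the oriented trivalent graph), I would first discard verticeless circles via disk cobordisms, then use bigon-birth cobordisms to join the remaining vertex-supporting components into a single ``backbone'' circle, and finally iterate the vertex move of Figure~\ref{fig6_002} to reorder the vertices along the backbone so that all splits precede all merges, producing a braid-like foam of the form $\widehat{F}_{\lambda,\sigma}$.

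Combining (i), (ii) and $\psi\circ\phi=\SAF$: surjectivity of $\phi$ comes from (ii), and $\phi$ is forced to be injective by $\psi\circ\phi=\SAF$ combined with the bijection $\SAF:\rmH_1(\Aut_{\IET},\Z)\to\R\wedge_{\Q}\R$ from \eqref{ses_iet}. The main obstacle is step (ii): establishing a braid-like normal form for arbitrary closed weighted oriented 1-foams, with careful bookkeeping of orientations and thin-edge orderings (Figures~\ref{fig3_005} and~\ref{fig3_008}). In particular, the backbone circle may emerge with total weight $L\neq 1$, and one must still recognize its class as $[\widehat{F}_{\lambda,\sigma}]$ for some IET; this is delicate because $\R\wedge_{\Q}\R$ is not closed under irrational scalar multiplication, so a uniform rescaling cobordism is unavailable, and the adjustment must instead be effected by adjoining auxiliary IET-foams on the side, using additivity of $\psi$ under disjoint union together with surjectivity of $\SAF:\Aut_{\IET}\to\R\wedge_{\Q}\R$ to force the resulting cobordism class into the image of $\phi$.
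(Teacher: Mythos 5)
Your intrinsic definition of $\psi$ (a sum over vertices with sign $\pm 1$ for split/merge, weighted by the ordered thin-edge labels) is a genuinely different and arguably cleaner construction than the paper's: the paper defines its invariant $\nu$ via a planar projection, with contributions from crossings and with split/merge contributions depending on planar chirality, and must therefore separately prove independence of the projection (moves $1$--$5$ of Figure~\ref{fig6_005}). Your version sidesteps that diagram-independence step entirely, and the cobordism-invariance checks you list are essentially the same as the paper's checks against Figure~\ref{fig6_008}; also note your $\psi$ agrees with the paper's $\nu$ only up to a factor of $2$ (the paper's diagram has $\tfrac12\SAF$ in place of your $\SAF$), which is harmless since $2$ is invertible in $\R\wedge_{\Q}\R$. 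Where the two arguments diverge is the ``surjectivity'' half: the paper never argues that $\phi$ is surjective directly. Instead it constructs a candidate inverse $\psi_1:\R\wedge_{\Q}\R\to\CoboR$, $a\wedge b\mapsto [U_{a,b}]$, proves it is a well-defined homomorphism (relations~\eqref{eq_bilin1}--\eqref{eq_bilin2} plus antisymmetry), shows $\psi_1$ is surjective by splitting off crossings from a braid-closure diagram (Figure~\ref{fig6_012}), and concludes $\psi$ and $\psi_1$ are mutually inverse isomorphisms; only then is $\phi$ an isomorphism, by the commuting triangle. The key point is that $U_{a,b}$ makes sense for \emph{any} $a,b>0$, so there is no normalization-to-total-weight-one issue at all.

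That normalization issue is exactly where your proposal has a genuine gap, and you diagnose it yourself. Your normal-form argument produces a braid-closure $\widehat{F}_{T'}$ with $T'\in\Aut_{\IET}([0,L))$ for some $L>0$ that need not equal $1$. When $L<1$ one can indeed extend $T'$ to $[0,1)$ by the identity and cancel the resulting superfluous $(1-L)$-strand with a singular cap, but when $L\ge 1$ there is no comparably direct cobordism, and the statement $\SAF(T_{L\lambda,\sigma})=L^2\SAF(T_{\lambda,\sigma})$ fails for irrational $L$, so no ``rescaling'' is available, as you note. Your proposed fix --- ``adjoining auxiliary IET-foams on the side, using additivity of $\psi$ under disjoint union together with surjectivity of $\SAF$ to force the resulting cobordism class into the image of $\phi$'' --- does not actually produce a cobordism. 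As written, the only way to deduce ``$[U]\in\im(\phi)$'' from ``$\psi(U)=\SAF(T)$ for some $T$'' is to already know that $\psi$ separates cobordism classes, i.e.\ that $\psi$ is injective; but $\psi$ injective together with $\psi\circ\phi=\SAF$ bijective is precisely equivalent to $\phi$ bijective, which is what you are trying to prove. So the sketch is circular. To repair it you would need an explicit cobordism reducing a braid-closure of arbitrary total weight to ones of total weight $<1$; the natural way to do this is, as in the paper, to split off all crossings into tripod foams $U_{a_i,b_i}$, then use bilinearity to subdivide each so that $a_i+b_i<1$, at which point each is the closure of a two-interval exchange of $[0,1)$ extended by the identity. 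In other words, the clean fix is to pass through the $\psi_1$ construction anyway.
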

The second isomorphism is the $\SAF$ invariant, and an isomorphism $\rmH_1(\Aut_{\IET},\Z)\cong K_1(\mcC_Z)$ is constructed in~\cite{Za2}. Category $\mcC_Z$ is the Zacharevich assembler category~\cite{Za2} for the $\IET$s, also see Remark~\ref{remark_Zakharevich} below.

\begin{proof}
    We establish an isomorphism 
    \begin{equation}\label{eq_map_psi}
    \psi \ : \  \CoboR \stackrel{\cong}{\lra}  \R\wedge_{\Q}\R 
    \end{equation}
    which is compatible with  homomorphism $\phi$ and makes the following diagram commute

\begin{center}
$\xymatrix{
\CoboR \ar[rr]^{\psi} & & \R\wedge_{\Q}\R \\ 
& &  \rmH_1(\Aut_{\IET},\Z). \ar[u]_{\frac{1}{2} \cdot \SAF} \ar[llu]^{\phi} \\
}$
\end{center}

Consider a weighted oriented 1-foam $U$ and project it generically to a plane to a diagram $D$. 

The projection has two types of merge points and two types of split points, depending on whether the order of thin edges at a point is clockwise or counterclockwise, see Table~\ref{fig6_004}.  

\vspace{0.1in}

\input{fig6_004}

\vspace{0.1in} 

To diagram $D$  assign an element $\nu(D)\in \R\wedge_{\Q} \R$ as a sum over local contributions: 
\begin{itemize}
    \item A split vertex with a clockwise thin edge order and a merge vertex with a counterclockwise thin edge order contribute $0$,
    \item For the other orientations the contributions are shown in the table in Figure~\ref{fig6_004}.
    \item A crossing of intervals of lengths $a$ and $b$ contributes $a\wedge b$, with orientations of intervals determining the order of $a,b$ in the product, see the table in Figure~\ref{fig6_004}. 
\end{itemize}
Some examples are shown in Figure~\ref{fig6_003}. Note that $\nu$ is additive under the disjoint union of diagrams. 

\input{fig6_003}
 
\vspace{0.07in}

We claim that $\nu(D)$ depends only on the 1-foam $U$, that is, different plane projections result in the same $\nu(D)$.  
It is easy to see that two such projections differ by moves in Figure~\ref{fig6_005} and versions of these moves given by reversing orientation of one or more of the components or reversing the order of thin edges at a vertex. 

\input{fig6_005}

\vspace{0.07in}

It is straightforward to check the invariance of $\nu$ under all variations of moves in Figure~\ref{fig6_005}. 
For example, independent of orientations of $a$ and $b$ lines, invariance of $\nu$ under move 1 in Figure~\ref{fig6_005} is the relation $a\wedge b+b\wedge a=0$. For move 2 it is $a\wedge a =0$. Move 4 and its version for the opposite orientation determine two entries of the Figure~\ref{fig6_004} table given the other three (they determine, for instance, entries 1 and 3 of row 2 given values at entries 2, 4, 5). Move 5 corresponds to the bilinearity property of the tensor product.  

\vspace{0.07in}

Suppose that weighted 1-foams $U_0,U_1$ are cobordant. A cobordism between them can be realized as a finite sequence of elementary cobordisms shown in Figure~\ref{fig6_008}. Note also that the cobordism in the bottom left of the figure is homeomorphic to the identity cobordism, with no topology change between its top and bottom boundaries, which are different projections onto the plane of the same decorated 1-foam. This cobordism is included for completeness, and it corresponds to move 4 in Figure~\ref{fig6_005}.  

\input{fig6_008}

\vspace{0.07in}

For each elementary cobordism
one can pick diagrams for the two 1-foams at its boundaries  so that they differ as shown in Figure~\ref{fig6_008}.
A direct computation implies that, in each case, the two diagrams have the same invariant $\nu$. 

Consequently, the homomorphism $\psi$ in \eqref{eq_map_psi} is well-defined. It is clearly surjective, since each generator $a\wedge b$ is the image of some foam.  Define a homomorphism 
\begin{equation}
    \label{eq_map_psi_prime_two}
    \psi_2 \ : \ \R\otimes_{\Z}\R  \stackrel{\cong}{\lra}  \CoboR 
\end{equation}
by taking $a\otimes b$ to the foam in Figure~\ref{fig6_003} on the right. 

To show that $\psi_2$ is well-defined we need to check the relations  
\begin{eqnarray}
\label{eq_bilin1}
    \psi_2((a_1+a_2)\otimes b) & = & \psi_2(a_1 \otimes b) + \psi_2(a_2 \otimes b),  \\
\label{eq_bilin2}
    \psi_2(a\otimes (b_1+b_2)) & = & \psi_2(a \otimes b_1) + \psi_2(a \otimes b_2),
\end{eqnarray}
which also imply $n\,\psi_2(a\otimes b)=\psi_2(na\otimes b)=\psi_2(a\otimes nb)$, and, since $\Q$ is a divisible group, imply $\psi_2(\frac{a}{n}\otimes b)=\psi_2(a\otimes \frac{b}{n}).$

\vspace{0.07in} 

\input{fig6_009}

The 1-foam $U_{a_1+a_2,b}$ associated to $(a_1+a_2)\otimes b$ is shown in Figure~\ref{fig6_009} on the left. It is cobordant to the foam $U$ with two crossings shown in the middle of the same figure. A crossing can be split off from any foam, as shown in Figure~\ref{fig6_012}. 

\input{fig6_012}

\vspace{0.07in} 

Splitting off both crossings from $U$ results in the foam $U_1$ shown on the right of Figure~\ref{fig6_009}. Foam $U_1$ is the union of $U_{a_1,b},U_{a_2,b}$ and a braid-like foam $U_2$ with no crossings and compatible thin edge orientations at vertices. Foam $U_2$ is cobordant to the circle of weight $a_1+a_2+b$ and, then, to the empty foam. Hence, foams $U_{a_1+a_2,b}$ and $U_{a_1,b}\sqcup U_{a_2,b}$ are cobordant and relation \eqref{eq_bilin1} holds. Relation \eqref{eq_bilin2} follows in the same way. 

\vspace{0.07in}

To check that $\psi_2$ factors through a homomorphism 
\begin{equation}
    \label{eq_map_psi_prime}
    \psi_1 \ : \ \R\wedge_{\Q}\R  \stackrel{\cong}{\lra}  \CoboR 
\end{equation}
we observe that 
\begin{equation*}
    \psi_2(a\otimes b+ b\otimes a) = 0 
\end{equation*}
since the disjoint union $U_{a,b}\sqcup U_{b,a}$ of 1-foams associated to $a\otimes b$ and $b\otimes a$ is null-cobordant. 

To see that $\psi_1$ is surjective, pick a 1-foam $U$. This foam can be represented as the closure of a braid-like 1-foam $B$. Choose a diagram $D$ of $B$ where all splits and merges have local $\nu$-invariant $0$, see Table~\ref{fig6_004}, with the closure $\widehat{D}$ describing the foam $U$.  

All crossings can be removed from $D$ via cobordisms shown in Figure~\ref{fig6_012}. There, as a first step, parallel lines of thickness $a$ and $b$ above and below the crossing are merged to create two intervals, each of thickness $a+b$. They are then brought near each other and merged via a saddle point cobordism. This results in a disconnected 1-foam which is the disjoint union of foam $U_{a,b}$ and a foam with one fewer crossing versus the original. 

One-foam $\widehat{D}$ is cobordant to the union $\widehat{D_1}\sqcup D_2$. Here $\widehat{D_1}$ is the closure of a braid-like crossingless diagram $D_1$ where all merges and splits have local $\nu$-invariant $0$, and $D_2$ is the union of foams $U_{a_i,b_i}$, $a_i,b_i\in \R_{>0}$ over all crossings $(a_i,b_i)$ of $D$.  
Diagram $D_1$ is cobordant to a circle of some thickness and, hence, null-cobordant.  
This shown surjectivity of $\psi_1$.   

Composition $\psi\circ \psi_1$ is clearly identity. That and surjectivity of $\psi_1$ implies that $\psi_1\circ\psi$ is the identity map. 

\end{proof}

\begin{remark}\label{remark_Zakharevich}
On the category side, we can follow Zakharevich~\cite{Za1,Za2} and  consider 
 the category $\mcC_Z$ with objects -- half-open interval $[a,b)\subset \R$. Morphisms are metric-preserving and order-preserving inclusions of intervals, and the  \emph{assembler} structure is given by pairs of morphisms $[a_1,b_1),[a_2,b_2)\stackrel{\psi_1,\psi_2}{\lra} [a,b)$ that cover the interval without overlaps. More generally, given a Zacharevich assembler category $\mcC$, one can consider $n$-dimensional foams where facets are decorated by objects of $\mcC$, $(n-1)$-dimensional seams by coverings of $\mcC$, and so on. The cobordism group of $\mcC$-decorated $n$-foams should then be related to $K_n(\mcC)$ as defined in~\cite{Za1}. 
 \end{remark} 

\begin{remark}
    It is possible to loosely compare the group $\Aut_{\IET}$ of $\IET$ transformations of the interval to the braid group and weighted 1-foams to links (note, though, that 1-foams are not embedded anywhere, while links are embedded in $\R^3$). Closure of a braid is an oriented link and closure of an IET can be described by an oriented weighted 1-foam. The analogue of the Alexander theorem is very simple: any oriented weighted 1-foam is the closure of some element of $\Aut_{\IET}$, and the analogue of the Markov theorem is straightforward to write down as well since 1-foams are not embedded in $\R^3$ (Markov's theorem is known in the harder case of graphs embedded in $\R^3$, see \cite{Is,KT10,CCD}). The analogue of the $\SAF$ invariant for oriented links is, perhaps, the sum of linking numbers $\mathsf{lk}(L_i,L_j)$, $i<j$, over all pairs of components of a link $L$. This analogy is inspired by Figure~\ref{fig6_004}, where $(a,b)$ crossing adds $a\wedge b$ to the $\SAF$ invariant, similar to the formula for the linking number. The $\SAF$ invariant is preserved by cobordisms of oriented weighted 1-foams, as Theorem~\ref{thm_saf} shows. 
    Linking number is invariant under some cobordisms in $\R^3\times [0,1]$ between links in $\R^3$. More precisely, pick an ordered countable set $S$ and equip a link $L$ with a map $\psi:\mathrm{comp}(L)\lra S$ from its set of connected components to $S$. Consider cobordisms $M$ between such links $L,L'$ equipped with a map $\mathrm{comp}(M)\lra S$ which is compatible with the maps $\psi,\psi'$ for its boundary links $L,L'$. The $S$-linking number
    \[
    \mathsf{lk}_S(L) \ := \ \sum_{i,j|\psi(i)<\psi(j)} \mathsf{lk}(L_i,L_j)
    \]
    is invariant under such cobordisms.  
\end{remark}

\begin{remark}\label{remark_H}
    In the definition of weighted foams abelian semigroup $(\R_{>0},+)$ can be replaced by an arbitrary commutative semigroup $(H,+)$. One can then form the abelian group $\Cob^1_H$ of $H$-weighted oriented 1-foams modulo cobordisms. The latter are $H$-weighted oriented 2-foams with boundary. The above arguments extend to an isomorphism 
    \begin{equation}
        H\wedge H \ \cong \  \Cob^1_H 
    \end{equation}
    taking $a\wedge b$ to $[U_{a,b}]$. Here $H\wedge H$ is the abelian group generated by symbols $a\wedge b$, $a,b\in H$ with defining relations
    \begin{eqnarray*}
        a\wedge b + b\wedge a & = & 0, \\
        (a_1+a_2)\wedge b & = & a_1\wedge b + a_2\wedge b. 
    \end{eqnarray*}
    In particular, the cobordism group of $\R$-decorated oriented 1-foams is isomorphic to that of $\R_{>0}$-decorated foams, since the natural map $\R_{>0}\wedge \R_{>0}\lra \R\wedge \R$ induced by the inclusion $\R_{>0}\hookrightarrow \R$ is an isomorphism. 
\end{remark}

There are several related ways to thicken an (oriented) $\R_{>0}$-weighted one-foam to a two-dimensional structure and a cobordism between such foams to a three-dimensional structure. 

\vspace{0.07in} 

{\it I. Lower limit topology.}  One can thicken an $\R_{>0}$-decorated 1-foam to a 2-dimensional structure by multiplying a 1-facet $I$ carrying label $a$ by $[0,a)$ and then gluing these products at vertices, see Figures~\ref{fig6_006}, \ref{fig6_007}.

\input{fig6_006}

\input{fig6_007}

We equip intervals $[0,a)$ with the \emph{lower limit topology $\ell$}, with a basis of open sets given by $[a_1,b_1)$, with $0\le a_1 < b_1 \le a$, see Munkres~\cite[Section 13]{Mun00}. With this topology, there are homeomorphisms $[0,a)\sqcup [0,b)\cong [0,a+b)$ given by placing $[0,b)$ immediately to the right of $[0,a)$. 

In this way, a one-foam $U$ as above is thickened to a topological space $T(U)$ which is locally homeomorphic to the product $[0,1)_{\ell}\times (0,1)$. A cobordism between two such one-foams is thickened to a topological space locally homeomorphic to $[0,1)_{\ell}\times (0,1)^2$.

This thickening of one-foams and two-foams is related to the Zakharevich assembler category, see Remark~\ref{remark_Zakharevich} above and the discussion in the introduction. 

Topological space $T(U)$ associated to a 1-foam $U$ carries a foliation where connected components of the leaves are locally $x\times (0,1)$ for $0\le x < a$. If all leaves are compact (and then necessarily homeomorphic to $\mathbb{S}^1$), the foam $U$ is null-cobordant. The opposite implication fails, since $U\sqcup U^!$ is null-cobordant for any $U$. 

\vspsm 

\input{fig7_004}

{\it II. Train tracks on surfaces.}  A weighted oriented 1-foam can be thickened to an oriented train track~\cite{PH92} on a surface with boundary, see Figure~\ref{fig7_004}. Transformations of unoriented train tracks that do not change the associated measured foliation or measured lamination~\cite[Sections 2.1, 2.3]{PH92} can be interpreted as cobordisms of train tracks in $S\times [0,1]$, where $S$ is the surface that contains the train track. 

\begin{remark}
Interval exchange transformations can be thickened to very flat surfaces, or translation surfaces~\cite{Zor_NT_Phys_Geom06,Zor_ICM06}, i.e., surfaces with a flat metric and singular points where total angles at these points are multiples of $2 \pi$. Oriented weighted one-foams, equipped with additional data, can likewise be thickened to very flat surfaces (we omit the details). 
\end{remark}

  
\section{Planar unoriented weighted foams and antisymmetric 2-brackets}  \label{subsec_planar}
    
Consider weighted unoriented 1-foams $U$ embedded in the plane $\R^2$, and denote an embedded foam also by $U$. Such a foam $U$ is analogous to a weighted unoriented train track on a surface~\cite{PH92}, except that no conditions are imposed on the Euler characteristic of components of the complement of $U$ in $\R^2$ (compare with~\cite[Section 1.1]{PH92}). An embedded 1-foam can be thickened to an open subset of $\R^2$ with an unoriented bidirectional flow on it, see Figure~\ref{fig9_001}.

\vspsm

\input{fig9_001}

\vspsm

By a cobordism between two unoriented embedded 1-foams $U_0,U_1$ we mean an unoriented embedded 2-foam $V\subset \R^2\times [0,1]$ so that $V\cap (\R^2\times \{i\})=U_i$, $i=0,1$. Note that for any 1-foam $U$ the disjoint union $U\sqcup U^!$ of $U$ with its mirror image is null-cobordant. See Figure~\ref{fig9_002} for an example of the mirror image of an unoriented embedded foam. 

\vspsm

\input{fig9_002}

\vspsm

Denote by $\Cob^{1,\up}_{\R_{>0}}$ the set of cobordism classes of unoriented embedded one-foams (``up'' in the superscript stands for \emph{unoriented planar}). The disjoint union and mirror image operations turn this set into an abelian group. Denote by $[U]$ the image of a 1-foam $U$ in that group. 

In general, there is no obvious cobordism between $U$ and $U^!$ (and we will see that $[U]\not= [U^!]$, in general). 

For $a,b> 0$ denote by $T(a,b)$  the foam shown in Figure~\ref{fig9_002}, which we also call a \emph{tripod foam}. 
Note that $T(a,b)^!\equiv  T(b,a)$.

\begin{prop} \label{prop_generated} The group $\Cob^{1,\up}_{\R_{>0}}$ is generated by symbols $[T(a,b)]$ of tripod 1-foams over all $a,b>0$. 
\end{prop}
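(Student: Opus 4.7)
\emph{Plan.} I would induct on the number $n$ of vertices of $U$, reducing at each step by bubbling off a tripod via a local cobordism at a chosen vertex.

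For the base case $n=0$, the foam $U$ is a disjoint union of weighted embedded circles. Each circle bounds a disk in $\R^2\times[0,1]$, giving a null-cobordism via the cap move shown at the bottom-middle of Figure~\ref{fig6_008}. Hence $[U]=0$, which is trivially in the subgroup generated by tripod classes.

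For the inductive step $n\ge 1$, pick a vertex $v$ of $U$ with incident weights $(a,b,a+b)$. The key step is to construct an embedded cobordism
\[
U\ \rightsquigarrow\ U'\sqcup T(a,b),
\]
where $U'$ is an embedded 1-foam with strictly fewer vertices than $U$. Granting this, the inductive hypothesis applied to $U'$ yields $[U']=\sum_i[T(a_i,b_i)]$, and therefore $[U]=[T(a,b)]+\sum_i[T(a_i,b_i)]$, completing the induction. To build the cobordism, I work inside a small disk $D\subset\R^2$ containing $v$ and leave $U$ unchanged outside $D$. Inside $D$, the foam $U$ is a Y-shape meeting $\partial D$ in three points weighted $a$, $b$, $a+b$. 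I would compose elementary moves of Figure~\ref{fig6_008}: a vertex cobordism to split $v$ into an auxiliary vertex structure, singular and ordinary saddles to close off three small self-loops of weights $a/2$, $b/2$, $(a+b)/2$ nested inside $D$, and a final saddle to detach the assembled $T(a,b)$ from the three strands that now reroute through $D$ to form, together with the unchanged part of $U$ outside $D$, the reduced foam $U'$.

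\emph{Main obstacle.} The technical heart is producing this local cobordism explicitly. The 2-foam must embed in $D\times[0,1]$, and this planarity constraint rules out many abstract surgeries available in the non-embedded setting. Additionally, the three half-edges at $v$ cannot be paired off into arcs (odd parity), so the rerouting of the three incoming strands must genuinely interact with the new vertex structure produced by the vertex cobordism, absorbing the ``missing'' vertex of $v$ into the newly created tripod while matching the weight additivity relations at every seam. Sequencing the planar-compatible elementary moves to achieve this, and verifying the weights, is the main technical content of the proof.
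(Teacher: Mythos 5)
Your inductive scheme has a genuine gap at the heart of the inductive step, and it is in fact the very issue you flag as the ``main obstacle.'' You propose a local cobordism $U\rightsquigarrow U'\sqcup T(a,b)$ supported inside a small disk $D$ around a chosen vertex $v$, with $U'$ having \emph{strictly fewer} vertices. But once $T(a,b)$ is detached, the three strands of weight $a$, $b$, $a+b$ entering $D$ must be capped off inside $D$ to yield a closed 1-foam $U'$. Weight additivity leaves essentially one option: each strand ends in a lollipop (a loop of weight $a/2$, $b/2$, or $(a+b)/2$ at the tip of the corresponding half-edge). Every such lollipop cap has one vertex. So $U'$ loses the vertex $v$ but gains three lollipop vertices, giving $n+2$ vertices rather than fewer, and the induction never closes. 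No rearrangement of the moves in Figure~\ref{fig6_008} avoids this bookkeeping; the parity/rerouting difficulty you mention is precisely the obstruction, and the proposal does not resolve it.

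The paper's proof of Proposition~\ref{prop_generated} is deliberately \emph{not} inductive and does not claim to reduce vertex count. Instead, it applies the ``cut an interval into two lollipops'' cobordism (Figures~\ref{fig9_003},~\ref{fig9_004}) \emph{simultaneously to every edge} of $U$. This disconnects $U$ into a disjoint union of one tripod $T(a_i,b_i)$ per original vertex $v_i$ (each tripod absorbs $v_i$ together with three new lollipop vertices), plus possibly some circles, which are null-cobordant via barbells (Figure~\ref{fig9_014}). The total vertex count roughly quadruples, but that is irrelevant: what matters is that the result is a disjoint union of standard pieces whose cobordism classes are $[T(a_i,b_i)]$ and $0$. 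Your base case ($n=0$: circles bound embedded disks in $\R^2\times[0,1]$) is correct and is a perfectly good alternative to the barbell argument; and your instinct that the answer is $\sum_i[T(a_i,b_i)]$ is right. To repair your argument, drop the vertex induction and cut at edges globally, as in the paper, rather than bubbling at a single vertex.
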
 

\input{fig9_003}

\input{fig9_004}

\begin{proof}
     The cobordism shown in Figures~\ref{fig9_003},~\ref{fig9_004} allows to convert an interval into two looped half-invervals. The loop at the end of an $a$-interval has thickness $a/2$. This cobordism can be applied at each edge of $U$, as shown in Figure~\ref{fig9_013} on the left, to cut $U$ into a union of tripod foams and circles. Each circle can further be cut into a barbell foam (the latter is shown in Figure~\ref{fig9_014}, together with a cobordism from it to the empty foam, in the top right corner of the figure). 
If a  foam $U$ has vertices $v_1, \dots, v_n$ with thin edges at the vertex $v_i$ of thickness $(a_i,b_i)$, going counterclockwise, then $[U]=\sum_{i=1}^n [T(a_i,b_i)].$ 
\end{proof}

\input{fig9_013}

\input{fig9_014}

\begin{remark} Figure~\ref{fig9_005} shows that $T(a,b)$ is cobordant to $T(a,b-a)$ if $a<b$. Passing to mirror images shows that $T(a,b)$ is cobordant to $T(a-b,b)$ if $a>b$. These cobordisms can be iterated to a foam cobordism version of the Euclidean division algorithm. In particular, iterating these operations we see that $T(a,b)$ is null-cobordant if $b\in \Q a$ (that is, if $a$ and $b$ are proportional over $\Q$). Cobordism between $T(a,a)$ and the empty foam is shown in Figure~\ref{fig9_014}.   
\end{remark}

\input{fig9_005}

\vspsm

Consider 1-foams in the first two rows of Figure~\ref{fig6_008}, ignoring orientations of edges and orders of thin edges at vertices and instead viewing the 1-foams as planar (embedded in $\R^2$). These 1-foams are cobordant in pairs, via 2-foam cobordisms embedded in $\R^2\times [0,1]$. At the same time, breaking up these 1-foams along edges results in disjoint unions of foam $T(x,y)$ for various $x,y\in \R_{>0}$. Passing to the cobordism group and replacing $\R_{>0}$ by a commutative semigroup $H$ motivates the following definition.   

\vspsm 

Given a commutative semigroup $(H,+)$, denote by $\Ztwo(H)$ the abelian group with generators $[a,b]$, $a,b\in H$, and defining relations 
\begin{eqnarray}
   \label{eq_rel_a} [a,a] & = & 0, \ \  a\in H, \\
   \label{eq_rel_b}
    [a,b]+[b,a] & = & 0, \ \ a,b\in H, \\
    \label{eq_rel_c} [a,b]+[a+b,c] & = & [a,b+c]+[b,c], \ \  a,b,c\in H. 
\end{eqnarray}
Note that relation \eqref{eq_rel_a} does not imply skew-commutativity relation \eqref{eq_rel_b} since the bracket $[a,b]$ is not bilinear. Equations \eqref{eq_rel_a} and \eqref{eq_rel_b} together are the strong version of the skewcommutativity property in the absence of bilinearity. Equation \eqref{eq_rel_c} is reminiscent of the 2-cocycle relation -- the difference between the two sides can be interpreted as the signed boundary of a 3-simplex with oriented edges labelled $a,b,c,a+b,b+c,a+b+c$.
This is explained in Figure~\ref{fig9_020}. The analogue of symbol $[x,y]$ is an oriented triangle with oriented sides labelled $x,y,x+y$. The oriented boundary of a 3-simplex with sides labelled by $a,b,c$ and their sums is the difference between the RHS and LHS of equation \eqref{eq_rel_c}. 

We call $\Ztwo(H)$ \emph{the antisymmetric 2-bracket} or \emph{antisymmetric 2-cocycle of $H$}. 

\input{fig9_020}

A homomorphism $f:H_1\lra H_2$ of commutative semigroups induces a homomorphism $\Ztwo(H_1)\lra \Ztwo(H_2)$.

\begin{prop} \label{prop_two_groups}
The cobordism group $\Cob^{1,\up}_{\R_{>0}}$ of planar unoriented weighted 1-foams is isomorphic to  $\Ztwo(\R_{>0})$: 
\begin{equation}
    \Cob^{1,\up}_{\R_{>0}} \ \cong \ \Ztwo(\R_{>0}) 
\end{equation}
taking $[T(a,b)]$ to $[a,b]$ for all $a,b>0$. 
\end{prop}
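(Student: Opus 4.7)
The plan is to construct mutually inverse homomorphisms
\[
\Phi \, : \, \Ztwo(\R_{>0}) \lra \Cob^{1,\up}_{\R_{>0}}, \qquad
\Psi \, : \, \Cob^{1,\up}_{\R_{>0}} \lra \Ztwo(\R_{>0}),
\]
with $\Phi([a,b])=[T(a,b)]$ and $\Psi([T(a,b)])=[a,b]$. Well-definedness of $\Phi$ amounts to verifying the three defining relations of $\Ztwo(\R_{>0})$ in $\Cob^{1,\up}_{\R_{>0}}$. Relation \eqref{eq_rel_a}, $[a,a]=0$, is the null-cobordism of $T(a,a)$ in the bottom row of Figure~\ref{fig9_014}. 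Relation \eqref{eq_rel_b}, $[a,b]+[b,a]=0$, follows because $T(b,a)\equiv T(a,b)^!$ and the disjoint union $U\sqcup U^!$ is null-cobordant for any embedded foam $U$ (top-left of Figure~\ref{fig9_014}). The 2-cocycle relation \eqref{eq_rel_c} is realized by the right-hand cobordism in Figure~\ref{fig9_013}: applying the tripod-decomposition cobordism from Proposition~\ref{prop_generated} at each of the two vertices of the two cobordant foams expresses one side as $T(a,b)\sqcup T(a+b,c)$ and the other as $T(a,b+c)\sqcup T(b,c)$. Surjectivity of $\Phi$ is then immediate from Proposition~\ref{prop_generated}.

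To define $\Psi$, I exploit planarity: at every trivalent vertex $v$ of $U\subset \R^2$, the three incident weights are $a_v,b_v,a_v+b_v$, so the thick edge is uniquely distinguished, and reading the two thin edges in counterclockwise order starting immediately after the thick edge yields a canonical ordered pair $(a_v,b_v)\in \R_{>0}^2$. Set $\Psi(U):=\sum_v[a_v,b_v]\in \Ztwo(\R_{>0})$. This is clearly additive on disjoint unions, and a planar reflection reverses every cyclic order, replacing each $(a_v,b_v)$ by $(b_v,a_v)$, so by \eqref{eq_rel_b} one has $\Psi(U^!)=-\Psi(U)$, compatibly with the inversion convention of $\Cob^{1,\up}_{\R_{>0}}$. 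Since $T(a,b)$ has a single vertex whose ordered pair is $(a,b)$, we obtain $\Psi\circ\Phi=\id_{\Ztwo(\R_{>0})}$.

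The main obstacle is to show $\Psi$ descends to cobordism classes. By standard Morse theory for embedded $2$-complexes with trivalent singularities, any embedded cobordism $V\subset \R^2\times [0,1]$ can be perturbed so that its height slices form a smooth family of planar 1-foams except at finitely many critical moments, each modelled by a short list of local moves: (i) planar isotopy, which preserves the labelled cyclic order at every vertex; (ii) Morse birth or death of a circle, or a Morse saddle between two arcs, neither of which alters the vertex set; (iii) paired birth or death of two trivalent vertices on a single edge (the unoriented planar analogues of the singular cap, cup and saddle of Figure~\ref{fig6_008}), where the two vertices created or destroyed carry opposite cyclic thin-edge orders, contributing $[a,b]+[b,a]$ which is zero by \eqref{eq_rel_b}; and (iv) the vertex-slide of Figure~\ref{fig9_013} (right), exchanging a configuration with total vertex contribution $[a,b]+[a+b,c]$ for one with contribution $[a,b+c]+[b,c]$, an equality guaranteed precisely by \eqref{eq_rel_c}. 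In every case $\Psi$ is preserved, so it is a cobordism invariant. Combining $\Psi\circ\Phi=\id$ with the surjectivity of $\Phi$ forces $\Phi\circ\Psi=\id$ as well, whence $\Phi$ and $\Psi$ are mutually inverse isomorphisms, as asserted. The anticipated sticking point is the completeness of the move list in step (iv)--i.e., confirming that no additional elementary moves introduce further relations beyond \eqref{eq_rel_a}--\eqref{eq_rel_c}; this is a purely local transversality check on singular $2$-foams in $\R^2\times[0,1]$ and should cause no genuine difficulty given that every vertex of such a cobordism looks locally like the link in Figure~\ref{fig3_019}.
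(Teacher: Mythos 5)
Your proposal follows essentially the same route as the paper: you define $\Phi([a,b])=[T(a,b)]$, verify the three relations to show $\Phi$ is well-defined, construct an inverse $\Psi$ as a sum over trivalent vertices (the paper calls this $\tau'$ and phrases it via tripod decomposition, but it is the same vertex sum by Proposition~\ref{prop_generated}), show $\Psi$ is a cobordism invariant by enumerating elementary moves, and conclude via $\Psi\Phi=\id$ and surjectivity of $\Phi$. Your enumeration of elementary moves and the relations they encode (Morse events contribute nothing, singular cap/cup gives \eqref{eq_rel_b}, vertex slide gives \eqref{eq_rel_c}) agrees with the paper.

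There is, however, a genuine gap in your check that $\Psi\circ\Phi=\id$. You assert that ``$T(a,b)$ has a single vertex whose ordered pair is $(a,b)$''. This is false: the tripod $T(a,b)$ of Figure~\ref{fig9_002} has \emph{four} trivalent vertices, not one --- the central vertex with ordered pair $(a,b)$, plus three lollipop vertices at the ends of the arms with ordered pairs $(a/2,a/2)$, $(b/2,b/2)$, and $((a+b)/2,(a+b)/2)$. Hence
\[
\Psi\bigl(T(a,b)\bigr) \;=\; [a,b] + [a/2,a/2] + [b/2,b/2] + [(a+b)/2,(a+b)/2],
\]
and this equals $[a,b]$ only after invoking relation \eqref{eq_rel_a}. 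This is not a cosmetic point: the paper explicitly singles out \eqref{eq_rel_a} as the one relation that does \emph{not} arise from any elementary cobordism (the elementary moves you list only give $2[a,a]=0$ via \eqref{eq_rel_b}, not $[a,a]=0$), and it is needed here precisely to cancel the lollipop contributions. You do cite the null-cobordism of $T(a,a)$ from Figure~\ref{fig9_014} when checking $\Phi$ is well-defined, but you need to use \eqref{eq_rel_a} again, and explicitly, in the $\Psi\Phi=\id$ step; as written, the step is unjustified because the description of $T(a,b)$'s vertex set is wrong.
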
 
\begin{proof} 
Consider the free abelian group $Z$ on generators $[a,b]'$, over all $a,b\in \R_{>0}$. Proposition~\ref{prop_generated} says that there is a surjective homomorphism  $\tau: Z\lra \Cob^{1,\up}_{\R_{>0}}$ taking $[a,b]'$ to $[T(a,b)]$. Furthermore, relations \eqref{eq_rel_a}-\eqref{eq_rel_c} hold for the images of $[a,b]'$ under $\tau$. Indeed, $T(a,a)$ is null-cobordant, giving the relation $\tau([a,a]')=0$. The disjoint union $T(a,b)\sqcup T(b,a)$ is null-cobordant, implying 
\begin{equation}\label{eq_tau_prime_1}
\tau([a,b]'+[b,a]')=0.
\end{equation} 
It is convenient to pair up $a$- and $b$-lollipop ends of $T(a,b)\sqcup T(b,a)$ and pass to the one-foam which is a split of $(a+b)$-strand into $a$- and $b$-strands, followed by the merge, see Figure~\ref{fig9_014} top left. There is a natural cobordism from the split-merge to the $(a+b)$-strand, which is another way to see the relation \eqref{eq_tau_prime_1}. Ignoring orientations and edge orders, this cobordism is depicted in the top left corner of Figure~\ref{fig6_008}.  Likewise, 
that 
\begin{equation}\label{eq_tau_prime_2}
\tau([a,b]')+\tau([a+b,c]')=\tau([a,b+c]')+\tau([b,c]')
\end{equation}
follows from them existence of  a cobordism between the two ways to merge parallel $a,b,c$-strands into $(a+b+c)$-strand, see Figure~\ref{fig9_013} on the right. For example, there is the one-vertex cobordism between these two 1-foams. 

Consequently, homomorphism $\tau$ descends to a surjective homomorphism, also denoted 
\begin{equation}\label{eq_tau}
\tau\ :\ \Ztwo(\R_{>0})\lra \Cob^{1,\up}_{\R>0}. 
\end{equation} 
Vice versa, breaking a planar weighted one-foam into tripods gives a map $\tau'$ from planar foams into $\Z$-linear combinations of symbols $[a,b]'$, and we would like to turn $\tau'$ into the inverse of $\tau$. A cobordism between two one-foams can be represented as a composition of elementary cobordisms, including vertex cobordisms, singular saddles, cups and caps, and the usual saddle, cup and cap cobordisms between 1-manifolds. These cobordisms do not change the linear combination of symbols $[a,b]$ associated to a one-foam, when viewed as an element of $\Ztwo(\R_{>0})$.

Note that the relation $[a,a]=0$ in \eqref{eq_rel_a} does not come from any elementary cobordism. The tripod $T(a,a)$ is null-cobordant, however, as shown in  Figure~\ref{fig9_014}. This discrepancy has the following explanation. When breaking a tripod $T(a,b)$ along every edge to construct the map $\tau'$  one adds three more terms to $[a,b]'$ due to the three lollipop vertices of the tripod, so that the composition of $\tau$ and $\tau'$ is 
\[ [a,b]'\stackrel{\tau}{\lra} [T(a,b)]\stackrel{\tau'}{\lra} [a,b]'+ [a/2,a/2]'+[b/2,b/2]'+[(a+b)/2,(a+b)/2]'.
\]
In particular, the composition $\tau'\tau$ differs from the identity due to the presence of three terms $[x,x]'$ for $x\in \{a/2,b/2,(a+b)/2\}$. Setting these terms to $0$ in $\Ztwo(\R_{>0})$ makes the composition $\tau'\tau =\id$, where now $\tau'$ is a well-defined map 
\begin{equation}\label{eq_tau_prime}
\tau' \ :\ \Cob^{1,\up}_{\R>0}\lra \Ztwo(\R_{>0}), \ \ [T(a,b)]\stackrel{\tau'}{\lra}[a,b]. 
\end{equation} 
In the other direction, it is clear that $\tau\tau'=\id$. 
Consequently, homomorphism $\tau$ in \eqref{eq_tau} is an isomorphism. 
\end{proof} 

Extending from $\R_{>0}$ to $\R$ and 
adding bilinearity relations on the symbols $[a,b]$, so that, in addition $[a_1+a_2,b]=[a_1,b]+[a_2,b]$, gives a surjective homomorphism 
\begin{equation}
    \theta' \ : \ \Ztwo(\R_{>0}) \lra \R\wedge_{\Z} \R \cong \R\wedge_{\Q} \R , 
\end{equation}
and, consequently, a surjective homomorphism 
\begin{equation}
    \theta \ : \ \Cob^{1,\up}_{\R_{>0}}\lra \R\wedge_{\Q}\R
\end{equation}
taking $[T(a,b)]$ to $a\wedge b$ (compare with the $\SAF$ invariant, see Section~\ref{sec_iet}). This allows to show that some unoriented planar 1-foams are not null-cobordant. 

\begin{corollary}
    Planar unoriented foam $T(a,b)$ for $a,b\in \R_{>0}$ is not null-cobordant if $b\notin \Q a$.
\end{corollary}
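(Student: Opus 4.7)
The plan is to use the homomorphism $\theta : \Cob^{1,\up}_{\R_{>0}} \lra \R\wedge_{\Q}\R$ constructed immediately before the corollary as an obstruction. If $T(a,b)$ were null-cobordant, then $[T(a,b)] = 0$ in $\Cob^{1,\up}_{\R_{>0}}$, so applying $\theta$ would force $\theta([T(a,b)]) = a\wedge b = 0$ in $\R\wedge_{\Q}\R$. So the task reduces to checking that $a\wedge b \neq 0$ whenever $b\notin \Q a$.

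The second step is to unpack the condition $a\wedge b = 0$ in $\R\wedge_{\Q}\R$. Since $\R$ is a $\Q$-vector space, $\R\wedge_{\Q}\R = \Lambda^2_{\Q}(\R)$ is the exterior square of $\R$ as a $\Q$-vector space. A standard fact about exterior squares of vector spaces says that a pure wedge $a\wedge b$ in $\Lambda^2_{\Q}(\R)$ vanishes if and only if $a$ and $b$ are linearly dependent over $\Q$. Concretely, if $a,b$ are $\Q$-linearly independent one may extend $\{a,b\}$ to a $\Q$-basis $\mathcal{B}$ of $\R$, and then $\{x\wedge y : x,y\in\mathcal{B},\ x<y\}$ is a basis of $\Lambda^2_{\Q}(\R)$, so $a\wedge b$ is a nonzero basis element.

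Combining these two steps gives the contrapositive: since $a\in \R_{>0}$, the condition $b\notin \Q a$ is equivalent to $\{a,b\}$ being $\Q$-linearly independent, which forces $a\wedge b\neq 0$ in $\R\wedge_{\Q}\R$, hence $[T(a,b)]\neq 0$ in $\Cob^{1,\up}_{\R_{>0}}$, so $T(a,b)$ is not null-cobordant. No step is really the main obstacle here; the only nontrivial input is the well-definedness and surjectivity of $\theta$, which has already been established, and the linear-algebra fact about pure wedges, which is elementary.
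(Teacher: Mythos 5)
Your proposal is correct and follows exactly the route the paper intends: the corollary is stated as an immediate consequence of the surjection $\theta:\Cob^{1,\up}_{\R_{>0}}\lra\R\wedge_{\Q}\R$ sending $[T(a,b)]$ to $a\wedge b$, combined with the standard fact that a pure wedge in the exterior square of a $\Q$-vector space vanishes precisely when its two arguments are $\Q$-linearly dependent. You have simply made explicit the linear-algebra step the paper leaves to the reader.
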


It turns out that the bracket $[a,b]$ is almost bilinear, as explained by the following result. 

\begin{prop}\label{prop_kernel}
    The kernels of $\theta'$ and $\theta$ consist of elements of order at most two. For any $a,b_1,b_2\in \R_{>0}$ the following relation  holds in $\Ztwo(\R_{>0})$:  
    \begin{equation}\label{eq_almost_bilin}
        2([a,b_1+b_2]-[a,b_1]-[a,b_2]) \ = \ 0.  
    \end{equation}
\end{prop}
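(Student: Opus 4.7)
The plan is to derive equation~\eqref{eq_almost_bilin} by applying the cocycle relation~\eqref{eq_rel_c} three times and then deduce the kernel claim formally. Write $\Delta(a;b_1,b_2) := [a,b_1+b_2] - [a,b_1] - [a,b_2]$ for the additive defect of $[a,-]$; what we want to show is that $2\,\Delta(a;b_1,b_2)=0$.

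First I would apply \eqref{eq_rel_c} to the ordered triples $(a,b_1,b_2)$ and $(a,b_2,b_1)$, solving each for $[a,b_1+b_2]$, and sum the two identities; the terms $[b_1,b_2]$ and $[b_2,b_1]$ cancel by \eqref{eq_rel_b}, leaving
\begin{equation*}
2\,\Delta(a;b_1,b_2) \ = \ [a+b_1,b_2] + [a+b_2,b_1] - [a,b_1] - [a,b_2].
\end{equation*}
Next I would apply \eqref{eq_rel_c} to the permuted triple $(b_2,a,b_1)$ and rewrite the resulting identity using antisymmetry, $[b_2,a]=-[a,b_2]$ and $[b_2,a+b_1]=-[a+b_1,b_2]$; it rearranges to $[a+b_1,b_2] + [a+b_2,b_1] = [a,b_1] + [a,b_2]$. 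Substituting this into the displayed equation gives $2\,\Delta(a;b_1,b_2) = 0$, which is \eqref{eq_almost_bilin}.

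For the kernel statement, let $\widetilde{G}$ denote the quotient of $\Ztwo(\R_{>0})$ by the additional bilinearity relation $[a_1+a_2,b] = [a_1,b] + [a_2,b]$ (equivalent, via \eqref{eq_rel_b}, to additivity in the second slot). Since $\R$ is a $\Q$-vector space and equals $\R_{>0}-\R_{>0}$, the assignment $[a,b] \mapsto a\wedge b$ induces a canonical isomorphism $\widetilde{G} \cong \R \wedge_{\Z} \R = \R \wedge_{\Q} \R$, through which $\theta'$ factors as the quotient map $\Ztwo(\R_{>0}) \twoheadrightarrow \widetilde{G}$. The kernel of that quotient is generated by the elements $\Delta(a;b_1,b_2)$, each of which is $2$-torsion by the above; hence $\ker\theta'$ consists of elements of order at most two. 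The corresponding statement for $\theta$ follows via the isomorphism $\Cob^{1,\up}_{\R_{>0}} \cong \Ztwo(\R_{>0})$ of Proposition~\ref{prop_two_groups}.

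The main obstacle is identifying the correct third instance of \eqref{eq_rel_c} to invoke: applying the cocycle relation with arguments in the ``natural'' orderings $(a,b_i,b_j)$ produces only tautologies, so one must use the permutation $(b_2,a,b_1)$, which after antisymmetry yields precisely the identity $[a+b_1,b_2]+[a+b_2,b_1]=[a,b_1]+[a,b_2]$ needed to cancel the excess terms in the first step.
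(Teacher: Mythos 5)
Your proof is correct and follows essentially the same approach as the paper: relation~\eqref{eq_almost_bilin} is derived from three instances of the 2-cocycle identity~\eqref{eq_rel_c} combined with antisymmetry, and your chosen triples $(a,b_1,b_2)$, $(a,b_2,b_1)$, $(b_2,a,b_1)$ are precisely the antisymmetry-reversals of the paper's $(a,b_1,b_2)$, $(b_1,b_2,a)$, $(b_1,a,b_2)$, so the two computations coincide identity by identity. Your additional remarks spelling out why each generator of $\ker\theta'$ is a bilinearity defect $\Delta(a;b_1,b_2)$ are a useful explicit filling-in of a step the paper leaves implicit.
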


\begin{proof}
Consider the following three equations:
\begin{eqnarray}\label{almost_bilin_1}
    [a,b_1+b_2] + [b_1,b_2] & = & [a+b_1,b_2] + [a,b_1], \\
\label{almost_bilin_2}
[b_1,b_2+a] + [b_2,a] & = & [b_1+b_2,a] + [b_1,b_2],\\
 \label{almost_bilin_3}
    [b_1,a+b_2] + [a,b_2] & = & [a+b_1,b_2]+ [b_1,a]. 
\end{eqnarray}
Equation~\eqref{almost_bilin_1} is the 2-cocycle relation, for $a,b_1,b_2$. Equation~\eqref{almost_bilin_2} is given by cyclicly permuting the terms of the previous equation, $a\mapsto b_1\mapsto b_2\mapsto a$. Equation~\eqref{almost_bilin_3} is given by transposing $a$ and $b_1$ in~\eqref{almost_bilin_1}. 
Writing down the linear combination \eqref{almost_bilin_1}+\eqref{almost_bilin_2}-\eqref{almost_bilin_3} and using that the bracket is antisymmetric gives relation \eqref{eq_almost_bilin}. 

This argument is borrowed from~\cite{Bilin_almost}, which shows bilinearity of the difference $[a,b]-[b,a]$ assuming only the 2-cocycle equation~\eqref{eq_rel_c}  for all $a,b\in H$, where $H$ is an abelian group. When the 2-cocycle is, additionally, antisymmetric, via equation~\eqref{eq_rel_b}, the difference $[a,b]-[b,a]=2[a,b]$.  
\end{proof}
The proposition tells us that the bracket $[a,b]$ is ``almost'' bilinear, with the difference $[a,b_1+b_2]-[a,b_1]-[a,b_2]$ either $0$ or an element of order $2$. 

\begin{corollary}\label{cor_twice_U}
    The foam $U\sqcup U$, where
    \[
    U \ = \ T(a,b_1+b_2)\sqcup T(b_1,a)\sqcup T(b_2,a),
    \]
    is null-cobordant for any $a,b_1,b_2>0$. 
\end{corollary}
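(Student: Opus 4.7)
The plan is to read the corollary directly off Proposition~\ref{prop_kernel} via the isomorphism $\tau$ of Proposition~\ref{prop_two_groups}. First I would compute the class of $U$ in $\Cob^{1,\up}_{\R_{>0}}$. Since disjoint union is the group operation,
\[
[U] \ = \ [T(a,b_1+b_2)] + [T(b_1,a)] + [T(b_2,a)].
\]
Transporting through the isomorphism $\tau : \Ztwo(\R_{>0}) \stackrel{\cong}{\lra} \Cob^{1,\up}_{\R_{>0}}$, we have
\[
\tau^{-1}([U]) \ = \ [a,b_1+b_2] + [b_1,a] + [b_2,a].
\]

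Next I would apply the antisymmetry relation~\eqref{eq_rel_b}, which replaces $[b_i,a]$ by $-[a,b_i]$, to rewrite
\[
\tau^{-1}([U]) \ = \ [a,b_1+b_2] - [a,b_1] - [a,b_2].
\]
By Proposition~\ref{prop_kernel}, twice this element vanishes in $\Ztwo(\R_{>0})$. Hence $2[U] = [U \sqcup U] = 0$ in $\Cob^{1,\up}_{\R_{>0}}$, which is precisely the statement that $U \sqcup U$ is null-cobordant.

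There is no real obstacle here; the work was done in Proposition~\ref{prop_kernel}, which combined the three instances \eqref{almost_bilin_1}, \eqref{almost_bilin_2}, \eqref{almost_bilin_3} of the 2-cocycle relation with antisymmetry to produce the ``almost bilinearity'' identity. The corollary is just the foam-theoretic translation of that identity, and the only thing to be careful about is keeping the order of the arguments in each tripod consistent with the sign conventions of $\Ztwo(\R_{>0})$, which is handled by a single application of~\eqref{eq_rel_b}.
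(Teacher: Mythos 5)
Your argument is correct and is exactly the intended derivation: translate $U$ into $\Ztwo(\R_{>0})$ via $\tau^{-1}$, use antisymmetry to rewrite the sum as $[a,b_1+b_2]-[a,b_1]-[a,b_2]$, and invoke relation~\eqref{eq_almost_bilin} from Proposition~\ref{prop_kernel}. The paper states the corollary without proof precisely because it is this immediate consequence of Propositions~\ref{prop_two_groups} and~\ref{prop_kernel}.
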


Foam $U$ is shown in Figure~\ref{fig9_021}. 

\input{fig9_021}

We do not know whether the scalar $2$ can be dropped from equation \eqref{eq_almost_bilin}, so that $[a,b]$ is bilinear in $a,b$. That would be equivalent to foams $U$ in Corollary~\ref{cor_twice_U} being null-cobordant for all $a,b_1,b_2>0$.

To further study abelian groups in   Proposition~\ref{prop_two_groups} it is natural to extend possible weights of foam facets from positive to all real numbers. First, we discuss the group $\Ztwo(H)$ for general commutative semigroups $H$, having $(\R,+)$ in mind. Note that Proposition~\ref{prop_kernel} holds for any commutative semigroup $H$ in place of $\R_{>0}$, so that there is an exact sequence 
\[
  0 \lra \ker \: \theta' \lra Z^2(H) \stackrel{\theta'}{\lra} H\wedge' H \lra 0
\]
with $2x=0$ for $x\in \ker \: \theta' $. Here $H\wedge' H$ is the abelian group which is the quotient of the abelian group closure of $H\otimes_{\Z} H$ by the relations $a\wedge' b+b\wedge' a=0$ and $a\wedge' a=0$, by analogy with \eqref{eq_rel_a},~\eqref{eq_rel_b}. Symbol $\wedge'$ is used instead of $\wedge$ since the relation $a\wedge a=0$ is usually not imposed in the definition of the exterior square (but follows for 2-divisible semigroups). 

\vspsm 

If $0\in H$, then \eqref{eq_rel_c} with $(a,b,c)=(a,0,b)$ implies that $[a,0]=[0,b]$ for all $a,b\in H$. Specializing to $b=0$ gives 
\begin{equation}\label{eq_zero}
    [a,0] \ = \ [0,a] \ = \ 0, \ \forall a\in H. 
\end{equation}

\begin{prop}\label{prop_rel}
    Assume that  $0\in H$. 
    Then 
    \begin{enumerate}
        \item $[a,0]=0$, $\forall a\in H$, 
        \item If $-a\in H$ (i.e., $a$ is invertible in $H$) then 
        \begin{eqnarray}
            2[a,-a] & = & 0, \\ \label{eq_b_ma} [b,-a] & = & [a,b-a]+[a,-a], \ \forall b\in H,   \\
            \label{eq_two_a} [2a,-2a] & = & 0,
        \end{eqnarray}
        \item If $-a,-b\in H$ then 
        \begin{equation}\label{eq_ma_mb} [-a,-b]=[a,b]+[a,-a]+[b,-b]-[a+b,-a-b].
        \end{equation}
    \end{enumerate}

\end{prop}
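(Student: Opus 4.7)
The plan is to derive all four claims by repeated application of the 2-cocycle relation~\eqref{eq_rel_c} for carefully chosen triples, combined with the antisymmetry~\eqref{eq_rel_b} and diagonal-vanishing~\eqref{eq_rel_a} relations. Claim (1) is essentially a restatement of~\eqref{eq_zero}, which was already derived in the paragraph preceding the proposition by plugging $(a,0,b)$ into~\eqref{eq_rel_c} and then specializing $b=0$; so nothing new is needed there.

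For claim (2), I would first obtain $2[a,-a]=0$ by applying~\eqref{eq_rel_c} to the triple $(a,-a,a)$: the middle sums collapse using part (1), leaving $[a,-a]=[-a,a]$, which combined with~\eqref{eq_rel_b} yields $2[a,-a]=0$. For equation~\eqref{eq_b_ma}, I would apply~\eqref{eq_rel_c} to $(b-a,a,-a)$; the term $[b-a,0]$ vanishes by part (1), and a single use of antisymmetry on $[b-a,a]$ produces the stated identity $[b,-a]=[a,b-a]+[a,-a]$. For~\eqref{eq_two_a}, I would feed $(2a,-a,-a)$ into~\eqref{eq_rel_c}; the diagonal term $[-a,-a]$ vanishes, and a short calculation gives $[2a,-2a]=[2a,-a]+[a,-a]$. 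Finally, specializing~\eqref{eq_b_ma} to $b=2a$ reduces $[2a,-a]$ to $[a,-a]$ (using $[a,a]=0$), at which point $[2a,-2a]=2[a,-a]=0$ follows immediately.

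For claim (3), I would apply~\eqref{eq_rel_c} to the triple $(a,b,-a-b)$. The resulting identity, after recognizing $b+(-a-b)=-a$, rearranges to
\[
[b,-a-b] \ = \ [a,b]+[a+b,-a-b]-[a,-a].
\]
Then I would substitute this into the instance of~\eqref{eq_b_ma} obtained by replacing $(a,b)$ with $(b,-a)$, namely $[-a,-b]=[b,-a-b]+[b,-b]$. This yields
\[
[-a,-b] \ = \ [a,b]+[a+b,-a-b]-[a,-a]+[b,-b].
\]
To match the form stated in the proposition, I would invoke the 2-torsion relations $2[a,-a]=0$ and $2[a+b,-a-b]=0$ from claim (2) to flip the signs of the middle two terms, producing the desired formula~\eqref{eq_ma_mb}.

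The only mildly delicate point is to make sure the sign-flipping via 2-torsion at the end is justified, i.e.\ that~\eqref{eq_two_a} applies to $a+b$ as well as to $a$. Since the hypothesis $-a,-b\in H$ together with commutativity guarantees $-(a+b)\in H$, the relation $2[a+b,-(a+b)]=0$ is a valid instance of~\eqref{eq_two_a}, so no extra hypothesis is needed. Every step above is a direct consequence of~\eqref{eq_rel_a}--\eqref{eq_rel_c}, so no new machinery is required; the proof is essentially a choreographed sequence of specializations of the 2-cocycle identity.
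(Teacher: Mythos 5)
Your proof is correct throughout, and the computations all check out. The main structural difference from the paper's argument is in how you organize the applications of the 2-cocycle relation~\eqref{eq_rel_c}. For $2[a,-a]=0$ the paper compares two merge trees on the four-tuple $(a,-a,a,-a)$; your route, applying~\eqref{eq_rel_c} once to $(a,-a,a)$ to get $[a,-a]=[-a,a]$ and then using antisymmetry, is shorter and more direct. For~\eqref{eq_b_ma} you use the same triple $(b-a,a,-a)$ as the paper. For~\eqref{eq_two_a} the paper again compares two merge trees (now on $(a,a,-a,-a)$, its equation~\eqref{eq_two_aa}); your derivation from $(2a,-a,-a)$ followed by the $b=2a$ specialization of~\eqref{eq_b_ma} is a reasonable alternative and arguably more transparent. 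For~\eqref{eq_ma_mb} the paper applies~\eqref{eq_rel_c} to $(a,-a,-b)$ and $(-a-b,a,b)$, arriving at the stated sign pattern directly; you instead apply it to $(a,b,-a-b)$, substitute into the $(b,-a)$-instance of~\eqref{eq_b_ma}, and then flip two signs using the 2-torsion relations. That final 2-torsion step is a slight detour compared with the paper's choice of triples, but it is fully justified (you correctly note that $a+b$ is invertible since $-a,-b\in H$, so $2[a+b,-(a+b)]=0$ is a legitimate instance of~\eqref{eq_two_a}). Overall you get the same result by a sequence of more elementary specializations, at the cost of one extra appeal to 2-torsion at the end; the paper's tree argument is more visual and avoids that last step.
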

\begin{proof} See \eqref{eq_zero} for (1).
   Notice that relation \eqref{eq_rel_c} can be visualized as the ``associativity'' property for merging $a,b,c$ into $a+b+c$ in two possible ways, where a vertex merging $x,y$ contributes $[x,y]$ to the sum, see Figure~\ref{fig9_008}.  

\input{fig9_008} 

\vspsm 

Iterating this associativity relation gives us a relation between any two tree diagrams for merging $(a_1,\ldots,a_n)$ into $a_1+\ldots +a_n$. Now apply the relation to the two trees shown in Figure~\ref{fig9_008} on the right  merging $(a,-a,a,-a)$ to $0$ and use that $[a,-a]+[-a,a]=0$ and $[b,0]=0$ for any $b$ to conclude that $2[a,-a]=0$.

    For the relation \eqref{eq_b_ma}, apply \eqref{eq_rel_c} to $(b-a,a,-a)$ to get $[b-a,a]+[b,-a]=[b-a,0]+[a,-a]$.
    For the relation \eqref{eq_two_a},  two of the ways to merge $(a,a,-a,-a)$ to $0$ give 
    \begin{equation}\label{eq_two_aa}
    [a,a]+[-a,-a]+[2a,-2a] = [a,-a]+[a,0]+[a,-a],
    \end{equation}
    resulting in $[2a,-2a]=2[a,-a]=0$
    
    For the relation \eqref{eq_ma_mb}, 
    apply \eqref{eq_rel_c} to $(a,-a,-b)$ and $(-a-b,a,b)$. 
\end{proof}

Notice that, modulo terms $[x,-x]$, relations \eqref{eq_b_ma} and \eqref{eq_ma_mb} are $[b,-a]\sim [a,b-a]$ and $[-a,-b]\sim [a,b]$.

\begin{remark}
    Let $H=(\Z/4,+)=\{0,1,2,3\}$. It is tedious but straightforward to check that the map 
    \begin{equation}
        \psi([a,b])=\begin{cases} 0 &\mathrm{if} \ a=0 \ \mathrm{or}\ b =0 \ \mathrm{or} \ a=b, \\
        1 &\mathrm{otherwise}
        \end{cases}
    \end{equation}
extends to a homomorphism $\psi:\Ztwo(\Z/4)\lra \Z/2$. Under this homomorphism the image of $[1,-1]=[1,3]$ is nontrivial. Via the surjective homomorphism $\Z\lra \Z/4$ we see that $[1,-1]$ in nontrivial in $\Ztwo(\Z)$ as well. Consequently, $[a,-a]$ is not always $0$ in $\Ztwo(H)$ for $a,-a\in H$. 
\end{remark}

Elements $[a,-a]$, over all $a,-a\in H$, generate a 2-torsion subgroup in $\Ztwo(H)$, which we can denote $\Ztwo_-(H)$. This subgroup is trivial if $H$ is 2-divisible, in view of the relation \eqref{eq_two_a}. In particular, it is trivial for $H=(\R,+)$.

We denote by $\R_{>0}$ the semigroup $(\R_{>0},+)$ and by $\R$ the group $(\R,+)$. 
Semigroup $(\R_{>0},+)$ is not a monoid, that is, $0\notin \R_{>0}$. The inclusion $\R_{>0}\subset \R$ induces a homomorphism  
\begin{equation}
\rho \ : \  \Ztwo(\R_{>0})\lra \Ztwo(\R).
\end{equation}
To differentiate between elements of the two groups denote by $[a,b]_{\R}$ the symbol of the pair $a,b\in \R$ viewed as an element of $\Ztwo(\R)$. The map $\rho$ is given by 
$\rho([a,b])=[a,b]_{\R}$ for $a,b>0$.

\begin{corollary}\label{cor_hold}
    In $\Ztwo(\R)$ and for $a,b>0$, the following relations hold: 
    \begin{equation}\label{eq_bend_a}
        [a,-b]_{\R}   =   \begin{cases} [b,a-b]_{\R} &  \mathrm{if} \ a>b, \\
        [b-a,a]_{\R} & \mathrm{if} \ a< b, \\
        0 & \mathrm{if} \ a= b, 
        \end{cases} 
    \end{equation}
    \begin{equation}
        [-a,b]_{\R} = - [b,-a]_{\R}, \ \  
        [-a,-b]_{\R} = [a,b]_{\R}. 
    \end{equation}
\end{corollary}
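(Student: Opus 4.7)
The plan is to reduce every relation to Proposition~\ref{prop_rel}, using the crucial simplification (pointed out in the remark preceding the corollary) that $\Ztwo_-(\R)$ is trivial. Indeed, $\R$ is 2-divisible, so from \eqref{eq_two_a} applied to $a/2$ we get $[x,-x]_\R = 0$ for every $x\in \R$. With this observation, every ``correction term'' of the form $[x,-x]_\R$ appearing in the formulas of Proposition~\ref{prop_rel} vanishes, and those formulas become clean identities. No new foam or semigroup-theoretic input is needed beyond this and the identities already established.

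First I would dispose of the last two relations. The identity $[-a,b]_\R = -[b,-a]_\R$ is immediate from the antisymmetry relation~\eqref{eq_rel_b}. The identity $[-a,-b]_\R = [a,b]_\R$ follows directly from \eqref{eq_ma_mb}, since the three correction terms $[a,-a]_\R$, $[b,-b]_\R$ and $[a+b,-(a+b)]_\R$ on the right-hand side all vanish.

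The relation \eqref{eq_bend_a} I would prove by iterating \eqref{eq_b_ma}. A first application, with the roles of $a$ and $b$ interchanged, gives
\[
[a,-b]_\R \;=\; [b,a-b]_\R + [b,-b]_\R \;=\; [b,a-b]_\R.
\]
When $a>b$ this is already of the stated form. When $a=b$ it reduces to $[b,0]_\R=0$ via \eqref{eq_zero}. When $a<b$ the second argument $a-b$ is negative, so I would write $a-b=-(b-a)$ with $b-a>0$ and apply \eqref{eq_b_ma} a second time, now using $b-a$ as the element being inverted, to rewrite $[b,-(b-a)]_\R$ as $[b-a,a]_\R$ up to a vanishing $[b-a,-(b-a)]_\R$ term.

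The entire argument is essentially bookkeeping once the 2-torsion subgroup has been killed off by 2-divisibility of $\R$; the only point requiring a little care is choosing the correct substitution in \eqref{eq_b_ma} at each step to move one negative argument into positive territory without introducing a term that cannot be dealt with by 2-divisibility. I do not anticipate any genuine obstacle.
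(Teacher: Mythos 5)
Your proof is correct and follows the same strategy as the paper: invoke Proposition~\ref{prop_rel}, then kill all $[x,-x]_{\R}$ terms using 2-divisibility of $\R$ via \eqref{eq_two_a}. You in fact fill in a detail that the paper leaves implicit, namely that the case $a<b$ of \eqref{eq_bend_a} requires a second application of \eqref{eq_b_ma} (with $b-a$ playing the role of the inverted element) to convert $[b,a-b]_{\R}$ into $[b-a,a]_{\R}$.
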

\begin{proof}
These relations are obtained by dropping off terms $[x,-x]$ from the relations in Proposition~\ref{prop_rel}. Terms $[x,-x]_{\R}=0$ since $\R$ is 2-divisible. 
\end{proof}

Corollary~\ref{cor_hold} implies that $\rho$ is surjective, since the symbol $[a,b]_{\R}$ with at least one of $a,b$ negative can be written as $\pm \rho([a',b'])$ for suitable $a',b'\in \R_{>0}$, or $[a,b]_{\R}=0$.
\begin{prop}\label{prop_iso}
    Homomorphism 
    \[
    \rho \ :\ \Ztwo(\R_{>0})\lra \Ztwo(\R)
    \]
    induced by the inclusion $\R_{>0}\subset \R$  is an isomorphism. 
\end{prop}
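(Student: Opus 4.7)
The plan is to construct a homomorphism $\sigma \colon \Ztwo(\R) \to \Ztwo(\R_{>0})$ with $\sigma \circ \rho = \id$. Since $\rho$ is already known to be surjective (via Corollary~\ref{cor_hold} and the discussion preceding the proposition), exhibiting such a left inverse will force $\rho$ to be an isomorphism.

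First I would define $\sigma$ on the free abelian group on generators $[a,b]_{\R}$, $a,b \in \R$, by cases modelled on the reduction formulas of Corollary~\ref{cor_hold}. Set $\sigma([a,b]_{\R}) = 0$ whenever $a = 0$, $b = 0$, or $a = b$; set $\sigma([a,b]_{\R}) = [a,b]$ for $a,b > 0$; and set $\sigma([a,b]_{\R}) = -[-a,-b]$ for $a,b < 0$, where the sign is forced by $[-a,-b]_{\R} = [a,b]_{\R}$ together with antisymmetry. For $a > 0$ and $b < 0$ with $a + b \neq 0$, set $\sigma([a,b]_{\R}) = [-b, a+b]$ when $a + b > 0$ and $\sigma([a,b]_{\R}) = -[-(a+b), a]$ when $a + b < 0$. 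Finally, for $a < 0$ and $b > 0$, set $\sigma([a,b]_{\R}) = -\sigma([b,a]_{\R})$, using the cases already defined.

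Next I would verify that $\sigma$ descends to $\Ztwo(\R)$ by checking that the three defining relations \eqref{eq_rel_a}--\eqref{eq_rel_c} hold on arbitrary triples. Relations \eqref{eq_rel_a} and \eqref{eq_rel_b} are built into the definition. For the 2-cocycle relation \eqref{eq_rel_c}, one performs a finite case analysis based on the signs of $a,b,c$ together with the signs of the partial sums $a+b$, $b+c$, $a+b+c$, and within each the relative ordering of magnitudes. In each case the defining formulas reduce both sides of \eqref{eq_rel_c} to an identity in $\Ztwo(\R_{>0})$ that follows from the 2-cocycle relation for a suitable positive triple, antisymmetry, and the diagonal-vanishing relation. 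The bookkeeping can be cut down by exploiting antisymmetry and the involution $(a,b,c) \mapsto (-c,-b,-a)$, which sends the 2-cocycle equation to itself up to a sign after applying Corollary~\ref{cor_hold}, so that only a small set of representative sign patterns must be verified directly.

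The identity $\sigma \circ \rho = \id$ is immediate from the definition of $\sigma$ on pairs of positive reals, which completes the argument once the cocycle relation is verified. The main obstacle is the case analysis for \eqref{eq_rel_c} with mixed signs; it is routine but sprawling, and the cleanest execution proceeds by first fixing the signs of $a,b,c$, then the signs of the partial sums, and finally using the symmetries noted above to eliminate redundant cases.
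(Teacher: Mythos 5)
Your overall strategy is exactly the one the paper uses: build a left inverse to $\rho$ by imitating the reduction formulas of Corollary~\ref{cor_hold}, verify the defining relations of $\Ztwo$ case by case, and conclude from $\sigma\circ\rho=\id$ plus surjectivity. However, two of your defining formulas have sign errors that make $\sigma$ fail the 2-cocycle relation, so the proposed map does not actually descend to $\Ztwo(\R)$.

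Concretely: for $a,b<0$ you set $\sigma([a,b]_\R)=-[-a,-b]$, but Corollary~\ref{cor_hold} states $[-a',-b']_\R=[a',b']_\R$ for $a',b'>0$; taking $a'=-a$, $b'=-b$ forces $\sigma([a,b]_\R)=\sigma([-a,-b]_\R)=[-a,-b]$, with a \emph{plus} sign. There is no role for antisymmetry here. Similarly, for $a>0$, $b<0$, $a+b<0$, writing $b=-b'$ with $b'>a>0$, the corollary gives $[a,-b']_\R=[b'-a,a]_\R$, so one needs $\sigma([a,b]_\R)=[b'-a,a]=[-(a+b),a]$, again with a plus sign rather than your $-[-(a+b),a]$.

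These signs matter. Run the cocycle check on the triple $(a,-b,c)$ with $c>b>a>0$, the very case the paper verifies. The correct identity is
\[
[b-a,a]-[b-a,a+c-b]=[a,c-b]-[b,c-b],
\]
but your formula gives $\sigma([a,-b]_\R)=-[b-a,a]$, so your left-hand side becomes $-[b-a,a]-[b-a,a+c-b]$. Matching the right-hand side would require $2[b-a,a]=0$ in $\Ztwo(\R_{>0})$, which is false in general (for instance, the image of $[1,\sqrt 2]$ under $\theta'$ in $\Lambda^2_\Q\R$ has infinite order). So your $\sigma$ is not well-defined as stated. Replacing those two minus signs by plus signs recovers exactly the paper's map $\delta$, and then your argument (surjectivity of $\rho$ plus $\sigma\circ\rho=\id$) goes through, modulo carrying out the case analysis that you and the paper both defer.
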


\begin{proof}
    Corollary~\ref{cor_hold} relations can be used to define a map from symbols $[a,b]_{\R}$ with $a,b\in \R$ to signed symbols $[a,b]$ with positive $a,b$. Consider the map $\delta$ defined on symbols as follows and assuming $a,b>0$:  
    \begin{eqnarray}
    \delta([a,b]_{\R}) & = & \delta([-a,-b]_{\R}) \ = \ [a,b], \\ 
    \delta([a,-b]_{\R}) & = & [b,a-b], \ \mathrm{if} \ a>b, \\
    \delta([a,-b]_{\R}) & = & [b-a,a], \ \mathrm{if} \ a<b, \\
    \delta([-a,b]_{\R}) & = & - \delta([b,-a]_{\R}), \\
    \delta([a,-a]_{\R}) & = & 0 . 
    \end{eqnarray}
    We claim that $\delta$ extends to a well-defined homomorphism 
    $\delta:\Ztwo(\R)\lra \Ztwo(\R_{>0})$. This map respects the relations \eqref{eq_rel_a} and \eqref{eq_rel_b}. A tedious case-by-case verification shows that it also respects the relation \eqref{eq_rel_c}. For example, consider relation \eqref{eq_rel_c} for the triple $(a,-b,c)$ where $c>b>a>0$. To check that 
    \[
    \delta([a,-b]_{\R})+\delta([a-b,c]_{\R}) = \delta([a,c-b]_{\R}) + \delta([-b,c]_{\R}), 
    \]
    we compute the two sides: 
    \begin{eqnarray*}
       \LHS & = & [b-a,a]+[a+c-b,b-a] , \\
       \RHS & = & [a,c-b]+[c-b,b],
    \end{eqnarray*}
    and write 
    \begin{eqnarray*}
    [a,c-b]+[c-b,b] & = & ([c-b,b] + [a,b-a])+[a,b-c]-[a,b-a] \\
    & = & ([c-b,a]+[a+c-b,b-a])+[a,b-c]-[a,b-a] \\
    & = & [a+c-b,b-a] + [b-a,a]\  =\  \LHS.
    \end{eqnarray*}
    The case $a>b>c$ follows by symmetry, and other cases to consider are $a>b,c>b$; $b>a+c$; $a+c>b, b>a,b>c$. All of them together take care of the relation \eqref{eq_rel_c} when only the middle number is negative. The case $(-a,-b,-c)$, i.e., all three numbers are negative, is trivial, but there are many other cases. They follow via straightforward computations which are omitted.
\end{proof}

The group $\Ztwo(H)$ depends only on the isomorphism class of abelian semigroup $H$. Thinking of $\R$ an abelian group and using the axiom of choice one can write $\R\cong \oplus_{J} \Q$, where the index set $J$ is uncountable. 
 Consequently, $\Ztwo(\R_{>0})\cong \Ztwo(\R)\cong \Ztwo(\oplus_{J} \Q),$ giving a more symmetric presentation of $\Ztwo(\R_{>0})$. This does not give an explicit  description of $\Ztwo(\R_{>0})$, just a description with more internal symmetries, but in our study of this group we stop here. 
A natural question would be to understand the kernel of the surjective homomorphism $\theta':\Ztwo(\R_{>0})\lra \Lambda^2_{\Q}(\R)$ sending $[a,b]$ to $a\wedge b$. From Proposition~\ref{prop_kernel} we know that $2x=0$ for any element $x\in \ker(\theta')$.

\begin{remark}  Note that $\Ztwo(\Q_{>0})\cong \Ztwo(\Q)=0$. This can be derived from all tripods $T(a,b)$ for $a,b\in \Q_{>0}$ being null-homotopic. A related observation is that thickening $T(a,b)$ with rational $a,b$ results in a foliated planar surface with all leaves closed and diffeomorphic to $\S^1$. 
\end{remark} 

Proposition~\ref{prop_iso} shows that passing from $\R_{>0}$ to $\R$  does not change the group $\Ztwo$.  Let us consider unoriented planar one-foams where edges are labelled by real numbers rather than just positive numbers (planar $\R$-weighted one-foams). A cobordism between two such foams is given by an unoriented $\R$-decorated two-foam in $\R^2\times [0,1]$. 
An $\R$-weighted two-foam also has vertices with local structure as in Figures~\ref{fig6_002} and~\ref{fig9_006}, but now $a,b,c$ are arbitrary real numbers, possibly $0$. 
Denote by $\Cob^{1,\up}_{\R}$ the cobordism group of $\R$-weighted planar unoriented one-foams. There is a natural homomorphism 
\begin{equation}
    \label{eq_iota}
    \iota \ : \ \Cob^{1,\up}_{\R_{>0}} \lra \Cob^{1,\up}_{\R}
\end{equation}
given by viewing $\R_{>0}$-weighted one- and two-foams as $\R$-weighted foams. Likewise, there is a homomorphism 
\begin{equation}\label{eq_tau_R}
\tau_R\ :\ \Ztwo(\R)\lra \Cob^{1,\up}_{\R} 
\end{equation} 
defined analogously to the  homomorphism  \eqref{eq_tau}. Map $\tau_R$ takes the symbol $[a,b]_{\R}$ to the concordance class of the tripod $T(a,b)$, where now weights may be non-positive. 

\begin{theorem}\label{thm_iso_iota}
    Maps $\iota$ and $\tau_R$ are isomorphism of abelian groups. 
\end{theorem}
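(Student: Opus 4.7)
The plan is to reduce the statement to a single claim via a commutative square and then prove that claim by adapting the proof of Proposition~\ref{prop_two_groups} to $\R$-weighted foams. Concretely, on a generator $[a,b]\in \Ztwo(\R_{>0})$ with $a,b>0$ both paths around the square
\[
\iota\circ\tau([a,b]) = [T(a,b)] = \tau_R\circ\rho([a,b])
\]
yield the tripod $T(a,b)$ viewed as an $\R$-weighted foam, so the square commutes. Proposition~\ref{prop_two_groups} gives that $\tau$ is an isomorphism and Proposition~\ref{prop_iso} gives that $\rho$ is an isomorphism, so $\tau_R$ being an isomorphism will force $\iota$ to be one as well (and vice versa). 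I focus on $\tau_R$.

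To show surjectivity of $\tau_R$ I repeat the argument of Proposition~\ref{prop_generated} verbatim in the $\R$-weighted setting. Given a planar $\R$-weighted $1$-foam $U$, first delete any edges carrying weight $0$ using the trivial cobordisms allowed by the convention following Figure~\ref{fig6_002}; then apply the edge-splitting cobordism of Figures~\ref{fig9_003}--\ref{fig9_004} to every remaining edge, producing a cobordism between $U$ and the disjoint union of its vertex-tripods $T(a_v,b_v)$ together with some closed components. Those closed components are cobordant to weighted circles (and barbells), which are null-cobordant by the argument of Figure~\ref{fig9_014}. Since the splitting cobordism only uses halving (which makes sense in $\R$), we conclude $[U]=\sum_v[T(a_v,b_v)]$ in $\Cob^{1,\up}_{\R}$, so $\tau_R$ is surjective.

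For injectivity I construct the inverse $\tau'_R:\Cob^{1,\up}_{\R}\to \Ztwo(\R)$ by the formula
\[
\tau'_R([U])\ =\ \sum_v [a_v,b_v]_{\R},
\]
summed over vertices with thin-edge labels $(a_v,b_v)$ in counterclockwise cyclic order. Well-definedness reduces to checking invariance under each elementary cobordism in Figure~\ref{fig6_008}: vertex cobordisms yield exactly the $2$-cocycle relation~\eqref{eq_rel_c}; singular saddles give the antisymmetry relation~\eqref{eq_rel_b}; ordinary saddles and caps relating components involving weight-$0$ loops or mirror pairs give $[a,0]_{\R}=0$ and $[x,-x]_{\R}=0$ (the latter via $\R$-divisibility and \eqref{eq_two_a}); and a null-cobordism of $T(a,a)$ gives~\eqref{eq_rel_a}. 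Exactly as in the proof of Proposition~\ref{prop_two_groups}, one then has $\tau_R\circ\tau'_R=\id$ immediately, and $\tau'_R\circ\tau_R=\id$ after absorbing the three spurious lollipop-vertex terms $[x/2,x/2]_{\R}$ arising from each tripod decomposition, which vanish by~\eqref{eq_rel_a}.

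The main obstacle is the bookkeeping of signs and weights in Step~3 when $a_v,b_v$ or $a_v+b_v$ may be negative or zero: one must confirm that every elementary cobordism still translates to a valid identity in $\Ztwo(\R)$, which is precisely what Corollary~\ref{cor_hold} and Proposition~\ref{prop_rel} were designed to supply. Once this verification is complete, $\tau_R$ is an isomorphism, and the commutative square shows that $\iota=\tau_R\circ\rho\circ\tau^{-1}$ is an isomorphism as well.
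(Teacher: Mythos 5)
Your approach is sound and genuinely shorter than the paper's, though both proofs treat the key claim that $\tau_R$ is an isomorphism at the level of a sketch. You observe the commutativity of the square $\iota\circ\tau = \tau_R\circ\rho$ on generators and then, since $\tau$ (Proposition~\ref{prop_two_groups}) and $\rho$ (Proposition~\ref{prop_iso}) are already known to be isomorphisms, correctly reduce the theorem to showing that $\tau_R$ alone is an isomorphism; $\iota=\tau_R\circ\rho\circ\tau^{-1}$ then comes for free. The paper instead treats $\tau_R$ and $\iota$ independently: it dispatches $\tau_R$ in a single sentence (``in the same way as for $\tau$,'' the same adaptation of Proposition~\ref{prop_two_groups} you invoke) and devotes the bulk of the proof to an explicit construction of an inverse $\iota^{\bullet}:\Cob^{1,\up}_{\R}\to\Cob^{1,\up}_{\R_{>0}}$ which converts $\R$-weighted $1$-foams and their cobordisms into $\R_{>0}$-weighted ones by taking absolute values of weights, bending negative-weight facets across seams, deleting weight-$0$ components, and performing a case-by-case replacement at vertices of $\R$-weighted $2$-foams (Figures~\ref{fig9_009}--\ref{fig9_018}). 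Your route buys brevity and sidesteps all that casework; the paper's route produces an explicit geometric inverse to $\iota$, which has independent content but is not logically necessary once the square is in place.

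Two minor imprecisions in your write-up. First, the cobordism that justifies deleting weight-$0$ edges is supplied by Figure~\ref{fig9_015} together with the barbell argument; the Remark following Figure~\ref{fig6_002} merely raises the convention without providing the cobordism. Second, for well-definedness of $\tau'_R$ you only need the defining relations \eqref{eq_rel_a}--\eqref{eq_rel_c} of $\Ztwo(\R)$, which already hold for all real entries by definition (together with $[a,0]_\R=0$, which follows from them since $0\in\R$); Corollary~\ref{cor_hold} and Proposition~\ref{prop_rel} are \emph{consequences} of these relations, used in the paper specifically to rewrite brackets with negative entries as brackets with positive entries for the $\iota^{\bullet}$ construction, which your argument never has to do.
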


\begin{proof}
That $\tau_R$ is an isomorphism can be shown in the same way as for $\tau$, see the proof of Proposition~\ref{prop_two_groups}. 
Next, observe 
that formulas in Corollary~\ref{cor_hold} convert symbols $[x,y]_{\R}$ when one of both $x,y$ are negative into symbols with positive entries. We now define the foam counterpart of these formulas. Start with an $\R$-weighted one-foam $U$ and convert it to an $\R_{\ge 0}$-weighted one-foam $U^{\circ}$ as follows. 
First, convert each line $a$ into a line of weight $|a|$, for $a\in \R^{\ast}$, see Figure~\ref{fig9_009}.

\input{fig9_009}

\input{fig9_010}

At vertices of $U$ edges of negative weight are bent to the opposite side to retain the balance of weights at a vertex.  Figure~\ref{fig9_010} shows how a single negative edge is bent at a vertex. 
Figure~\ref{fig9_011} shows modifications at a vertex if two out of three edges have negative weights. In Figure~\ref{fig9_012} we see that an $(a,-a)$ vertex gets smoothed out into part of a segment, and that no bending is necessary at an $(-a,-b)$ vertex, just weight reversal at all three edges of the vertex. 

\input{fig9_011}

\input{fig9_012}

Foam $U^{\circ}$ may have edges (and circles) of weight $0$. A circle of any weight is null-cobordant even if there is a 1-foam inside the disk that it bounds, by converting the circle to a barbell. 
Given a $0$-edge $e$, applying Figure~\ref{fig9_013} (left) transformation at the two endpoints of $e$ produces tripod foams $T(a_1,0)$ and $T(a_2,0)$ (or their reflections) for some $a_1,a_2$. 
These foams are null-cobordant (see Figure~\ref{fig9_015}), and cobordant to barbell foams with weights $a_1,a_2$ (the latter are null-cobordant as well, see Figure~\ref{fig9_014}). Inserting these barbell foams back into the original 1-foam and composing these cobordisms shows that an $\R_{\ge 0}$-weighted 1-foam $V$ with a $0$-weight edge $e$ is cobordant to the same foam with edge $e$ deleted. Thus, all edges and circles of weight $0$ (components of weight $0$) can be deleted from an  $\R_{\ge 0}$-weighted foam $V$, resulting in a cobordant $\R_{>0}$-foam. In particular, this is shown as the second step in the top row of Figure~\ref{fig9_012}. 

\vspsm 

\input{fig9_015}

Denote by $U^{\bullet}$ the foam $U^{\circ}$ with weight $0$ components removed. 
The map $U\mapsto U^{\bullet}$ from planar $\R$-weighted 1-foams to planar $\R_{>0}$-weighted 1-foams needs to be extended to cobordisms between 1-foams, that is, to 2-foams with boundary.

Suppose that $F$ is an $\R$-weighted two-foam with boundary $U$, unoriented and embedded in $\R^2\times [0,1)$, with $U\cong \partial F \cong F\cap (\R^2\times \{0\})$. We convert all facets of $F$ with negative labels $-a$ to positive labels $a>0$. 

At each seam of $F$ two facets merge into one. If one  or two of these facets had negative weights, we make these facets approach the seams from the opposite side, by taking the rules in Figures~\ref{fig9_010},~\ref{fig9_011},~\ref{fig9_012} and multiplying them by the interval to get the corresponding rules for 2-foams. These modifications are  depicted in Figure~\ref{fig9_016}.  

\input{fig9_016}

Next, one produces modification rules at vertices of $F$, where facets have weights $a,b,c,a+b,b+c,a+b+c$, for some $a,b,c\in \R$. Taking the link of a vertex results in a 2-foam $L(a,b,c)$ on the 2-sphere (see Figure~\ref{fig3_019} on the left, with orientations and thin edge orders at nodes dropped). Converting it to $L(a,b,c)^{\bullet}$, one needs to check that it is null-cobordant through a $\R_{>0}$-weighted foam and pick a particular cobordism to replace each $(a,b,c)$-vertex of an $\R$-weighted 2-foam. This is done on a case-by-case basis, and the rest of the proof closely resembles that of Proposition~\ref{prop_iso} towards the end. Here we provide the cobordisms in two out of the many cases here. Instead of the cobordism from $L(a,b,c)^{\bullet}$ to the empty foam we depict cobordisms between two possible ways to merge $a,b,c$ edges into the $a+b+c$ edge, see Figure~\ref{fig9_013} on the right. 

We consider the case when the middle number is negative and write it as $-b$. Since intermediate edges are $a-b$ and $c-b$, there are four cases to consider: 
\begin{enumerate}
    \item $a\ge b, c\ge b\ge 0$, 
    \item $a\ge b \ge c\ge 0$ (case $c\ge b\ge a\ge 0$ is given by reflection), 
    \item $b\ge a, b\ge c, a+c\ge b$, $a,c\ge 0$,  
    \item $b\ge a+c$, $a,c\ge 0$. 
\end{enumerate}
In each of the cases, one constructs an $\R_{>0}$-weighted cobordism between the corresponding $\R_{>0}$-weighted one foams. Schematically, Figure~\ref{fig9_019} shows what needs to be done in case (1) above, and similar for the other cases.  

\vspsm

\input{fig9_019}

\input{fig9_017}

\input{fig9_018}

Cobordisms between the  diagrams that replace the  corresponding vertices are shown for cases (1) and (3) in Figures~\ref{fig9_017} and~\ref{fig9_018} via  sequences of their cross-sections.  

\vspsm 

Further cases include $L(-a,b,c)$, with the first number negative (that of $L(a,b,-c)$ follows by reflection symmetry). Another case is when two numbers out of three are negative. The case $L(-a,-b,-c)$ is easy, since no modifications are done at any of the four vertices of the boundary foam. (It is likely that additional symmetries of $L(a,b,c)^{\bullet}$ can be used to reduce the number of cases but we have not checked that.)

This procedure converts $\R$-weighted 2-foam $F$ with boundary $U\cong\partial F$ to an $\R_{\ge 0}$-weighted embedded foam, denoted $F^{\circ}$, with boundary $U^{\circ}$. 

 Next, $0$-facets of $F^{\circ}$ can be removed as well, by analogy and extending our deletion of $0$-facets of the foam $U^{\circ}$. The resulting 2-foam $F^{\bullet}\subset \R^2\times [0,1)$ is  $\R_{>0}$-weighted, with the boundary $U^{\bullet}\subset \R^2\times \{0\}$. Consequently, our procedure for converting $\R$-weighted 1- and 2-foams into $\R_{>0}$-weighted 1- and 2-foams gives a homomorphism 
 \[
 \iota^{\bullet} \ : \ \Cob^{1,\up}_{\R}\lra \Cob^{1,\up}_{\R_{>0}}.
 \]
 It is clear that $\iota^{\bullet}\circ \iota=\id$, since $\iota^{\bullet}$ on foams with all facets positive is the identity map.  

 To show that 
 $\iota\circ \iota^{\bullet}=\id$ it suffices to check that $\iota$ is surjective. For that, it is enough to show that $[T(a,b)]$ is in the image of $\iota$ for all $a,b\in \R$. Consider the tripod $T(a,-b)$ for $a\ge b\ge 0$. There are two ways to merge strands $a,-b,b$ into an $a-b+b=a$ strand, with the one-vertex 2-foam cobordism connecting the two ways to merge. This translates into a cobordism between $\R$-weighted 1-foams: 
 \[
 T(a,-b)\sqcup T(a-b,b) \sim T(a,0) \sqcup T(-b,b). 
 \]
 Foam $T(a-b,b)$ has positive weights. Foam $T(a,0)$ is null-cobordant via $\R_{\ge 0}$-weighted foams, see   Figure~\ref{fig9_015}. Foam $T(-b,b)$ is null-cobordant, since $[-b,b]_{\R}=0$ and $[T(-b,b)]$ is the image of $[-b,b]_{\R}$ under the homomorphism $\tau_R$ in \eqref{eq_tau_R}. Alternatively, computation in \eqref{eq_two_aa} with $-b/2$ in place of $a$ can be converted into a description of a cobordism from $T(-b,b)$ to the empty 1-foam. Consequently, $T(a,-b)$ is cobordant via an $\R$-weighted 2-foam to an $\R_{>0}$-weighted 1-foam $T(b,a-b)$. Reflecting in the plane shows that $T(-b,a)$ is cobordant to $T(a-b,b)$. We leave the remaining cases: $T(a,-b)$ with $b>a> 0$ and $T(-a,-b)$, $a,b> 0$ to the reader.  

Consequently, $\iota$ and $\iota^{\bullet}$ are mutually-inverse isomorphisms. 
This completes the proof of Theorem~\ref{thm_iso_iota}.
\end{proof} 

Our constructions and results can be summarized into the following statement.  

\begin{theorem}\label{thm_cd_iso}
There is a commutative diagram of isomorphisms of abelian groups  
\begin{equation}
\begin{CD}
\Ztwo(\R_{>0}) @>\tau>> \Cob^{1,\up}_{\R>0}  \\
@VV{\rho}V @VV{\iota}V\\
\Ztwo(\R) @>{\tau_{\R}}>> \Cob^{1,\up}_{\R} 
\end{CD}
\end{equation}
The top arrow is given by \eqref{eq_tau}, the bottom arrow $\tau_{\R}$ is the map \eqref{eq_tau_R}.  The left arrow is the map $\rho$ in Proposition~\ref{prop_iso}, the right arrow is given by \eqref{eq_iota}.   
\end{theorem}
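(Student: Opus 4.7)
The plan is to simply assemble the preceding results into a single diagrammatic statement. Each of the four arrows in the square has already been established to be an isomorphism: the top horizontal map $\tau$ by Proposition~\ref{prop_two_groups}, the left vertical map $\rho$ by Proposition~\ref{prop_iso}, and both the bottom horizontal map $\tau_{\R}$ and the right vertical map $\iota$ by Theorem~\ref{thm_iso_iota}. Consequently the only thing that needs to be verified is the commutativity of the square.

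Since $Z^2(\R_{>0})$ is generated by the symbols $[a,b]$ with $a,b\in \R_{>0}$ and every arrow in the diagram is a homomorphism of abelian groups, it suffices to check the identity $\iota\circ\tau = \tau_{\R}\circ\rho$ on these generators. Chasing $[a,b]$ with $a,b>0$ around the upper-right corner gives
\[
[a,b] \xmapsto{\ \tau\ } [T(a,b)] \in \Cob^{1,\up}_{\R_{>0}} \xmapsto{\ \iota\ } [T(a,b)] \in \Cob^{1,\up}_{\R},
\]
where the last step simply reinterprets the $\R_{>0}$-weighted tripod as an $\R$-weighted 1-foam. Chasing the same element through the lower-left corner gives
\[
[a,b] \xmapsto{\ \rho\ } [a,b]_{\R} \xmapsto{\ \tau_{\R}\ } [T(a,b)] \in \Cob^{1,\up}_{\R}.
\]
Both paths yield the cobordism class of the same tripod foam, so the square commutes on generators, hence everywhere.

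There is no serious obstacle to overcome here: all the difficulty has already been absorbed by the preceding work. The analytic content sits in Proposition~\ref{prop_iso}, where $Z^2(\R)$ is reduced to the subgroup generated by positive-weight symbols via the relations in Corollary~\ref{cor_hold}, and in the proof of Theorem~\ref{thm_iso_iota}, where the foam-theoretic analogue of these bending relations is carried out by explicit case-by-case replacements of vertices and seams carrying negative labels. The present theorem merely records that these two parallel constructions--one purely algebraic on symbols, one geometric on embedded weighted foams--are compatible under the maps $\tau$ and $\tau_{\R}$.
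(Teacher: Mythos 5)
Your proposal is correct and matches the paper's intent: the paper presents Theorem~\ref{thm_cd_iso} as a summary of the preceding results, without a separate proof, and your argument (all four maps already shown to be isomorphisms, commutativity checked on generators $[a,b]$ with $a,b>0$, where both paths send $[a,b]$ to the class of the tripod $T(a,b)$ in $\Cob^{1,\up}_{\R}$) is exactly the implicit verification the authors intend.
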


In particular, cobordism groups of $\R$-weighted and $\R_{>0}$-weighted planar unoriented one-foams are isomorphic, and they are isomorphic to the corresponding abelian groups generated by symbols $[a,b]$ over either all positive real  $a,b>0$ or, alternatively, all real $a,b$, subject to relations \eqref{eq_rel_a}-\eqref{eq_rel_c} in each of the two cases.

\begin{remark}
In the isomorphisms $\tau$ or  $\tau_{\R}$ in Theorem~\ref{thm_cd_iso}, commutative semigroup $\R_{>0}$ or commutative group $\R$ can be replaced by any uniquely 2-divisible commutative semigroup $H$ or by a semimodule over $\Z_{>0}[1/2]$. Unique 2-divisibility is needed to consistently split a planar $H$-weighted 1-foam into a union of tripods, since lollipop loops carry weights $a/2,b/2,(a+b)/2$. These divisions by two must exist and be consistent. One then gets an isomorphism of abelian groups 
\begin{equation}\label{eq_iso_H}
    \Cob^{1,\up}_{H} \cong \Ztwo(H). 
\end{equation}
The group $\Ztwo(H)$ can be thought of as a universal \emph{antisymmetric} 2-cocycle on $H$. Antisymmetry condition forces the bracket to be almost bilinear, see Proposition~\ref{prop_kernel}.

 \emph{Symmetric} 2-cocycles are not almost bilinear, in this sense, and allow for a richer structure. Interestingly, they naturally appear in the theory of formal groups~\cite[Section 6]{Str19}, with relations \eqref{eq_rel_c} and  $[a,b]=[b,a]$ interpreted as the infinitesimal version of the formal group law axioms. Formal groups are closely related to cobordism groups of manifolds (to the complex cobordism generalized cohomology theory).   
 
 It seems possible to interpret symmetric 2-cocycles in the framework of foam cobordisms. A step  towards such interpretation is to consider unoriented  cobordisms (2-foams with boundary) between 1-foams, not embedded anywhere, where 2-foams have oriented seams.  One imposes  the compatibility condition on seam orientations at vertices of the 2-foam to match the 2-cocycle relation. Absence of an embedding and not keeping track of the order of thin facets at a seam leads to the symmetric relation $[a,b]=[b,a]$. Antisymmetry property vanishes, since in the cobordisms in the top row of  Figure~\ref{fig6_008} the seams are now oriented and the two vertices of the boundary 1-foam for each relation have opposite types, leading us to denote the two brackets by $[a,b]_+$ and $[a,b]_-$ and giving the relation $[a,b]_+ +[b,a]_-=0$, which simply allows to express one bracket via the other. The bracket $[a,b]_+$, then, satisfies the symmetry property and the 2-cocycle condition. 
\end{remark} 

\begin{remark}
    For an interesting cobordism group we considered planar \emph{unoriented} weighted 1-foams in this section. Planar \emph{oriented} 1-foams do not allow loops and creation of tripod foams $T(a,b)$. The cobordism group of suitably defined planar oriented 1-foams is trivial.  
\end{remark}

Constructions and results of this section demonstrate the possibility of having  interesting cobordism groups of planar objects other than embedded 1-manifolds, with additional decorations, such as 
(positive) real weights.  
Notice that the resulting cobordism group has the flavour of scissor congruence groups (for instance, surjecting onto $\Lambda^2_{\Q}\R$, so that $\R$ is essentially viewed as a discrete group, which is typical of scissor congruence).  

%
%

\section{Variations on weighted foams}\label{sec_variations}

Here we go back to considering oriented $\R_{>0}$-weighted foams, not embedded anywhere, as in Section~\ref{sec_iet}. 

\subsection{Foams with flips}
\label{subsec_flips}

The group of $\IET$ automorphisms of the interval can be enhanced with flips $[0,a]\lra[0,a], x\mapsto a-x$, see~\cite{La}, viewed as a subgroup of all measurable automorphisms of $([0,1],|dx|)$.  Denote this automorphism group by $G_f$; it contains $\Aut_{\IET}$ as a subgroup. Arnoux~\cite{La} has shown that this group is simple, in particular, $[G_f,G_f]=G_f$. 

\input{fig7_001}

\input{IET_000}

\input{IET_001}

\input{fig7_002}
    
A flip of an interval $[0,a]$ can be encoded by a dot on a line labelled $a$, see Figure~\ref{fig7_001}. $\IET$ 1-foams and 2-foams can be enhanced by flip dots and flip networks, subject to the following relations: 
    \begin{itemize}
        \item Two dots on an interval can cancel via a cobordism, 
        \item A flip line on an $(a+b)$-facet can cross a seam and become two flip lines on $a,b$ facets, reversing the order of the two thin facets at the seam,   
    \end{itemize}
as shown in Figures~\ref{fig7_001} and \ref{fig7_002}. Figure~\ref{IET_000} shows how a flip on an $(a+b)$-line is modified to flips on $a$- and $b$-lines, via a concordance of braid-like 1-foams with flips.  Figure~\ref{IET_001} shows the thickened version of that equivalence transformation.  
Denote by $\CoboRf$ the cobordism group of $\R_{>0}$-decorated one-foams with flips. 

\begin{theorem}\label{thm_with_flips} The cobordism group of weighted oriented one-foams with flips is trivial, 
\begin{equation}
    \CoboRf \ \cong \ 0. 
\end{equation}
\end{theorem}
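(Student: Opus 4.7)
The plan mirrors the strategy of Section~\ref{sec_iet}, but now with the flipped automorphism group $G_f$ replacing $\Aut_{\IET}$. First, I would extend the construction of Proposition~\ref{prop_hom} to define a map $\phi_f \colon G_f \to \CoboRf$ sending a flipped IET $T$ to the cobordism class of its closure $\widehat{F}_T$, built diagrammatically as in Figure~\ref{fig6_010} but with a flip dot inserted on each strand wherever $T$ performs a flip. The homomorphism identity $\widehat{F}_{TS} \sim \widehat{F}_T \sqcup \widehat{F}_S$ generalizes verbatim from Proposition~\ref{prop_hom}, the new ingredients being the flip-through-vertex move of Figure~\ref{fig7_002} and the two-dot cancellation of Figure~\ref{fig7_001}, which together allow one to identify the composite of two flipped IETs on the nose.

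Next, I would show $\phi_f$ is surjective. Given any weighted oriented 1-foam with flips $U$, one cancels pairs of flip dots on each edge via Figure~\ref{fig7_001}, slides remaining dots past vertices via Figure~\ref{fig7_002} so that they accumulate on a single horizontal slice, and then applies the braid-closure normalization used in the proof of Theorem~\ref{thm_saf}. The resulting foam is $\widehat{F}_T$ for some $T \in G_f$, so $\phi_f$ is surjective.

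Since $\CoboRf$ is abelian, $\phi_f$ factors through $G_f^{\mathrm{ab}} = H_1(G_f,\Z)$. By Arnoux's theorem~\cite{La}, $G_f$ is simple and non-abelian, whence $[G_f,G_f] = G_f$ and $H_1(G_f,\Z) = 0$. Combined with surjectivity, this gives $\CoboRf = 0$. Equivalently, and more directly: perfectness of $G_f$ lets us write every $T$ as a product of commutators, and Figure~\ref{fig7_003} already exhibits (using only orientation-preserving cobordisms, hence valid in the flipped setting) a null-cobordism of the closure of a commutator, so $[\widehat{F}_T] = 0$ for every $T \in G_f$.

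The main obstacle is the diagrammatic bookkeeping in the surjectivity step: one must verify that the cobordism moves used to reduce an arbitrary flipped foam to braid-closure form -- saddles, singular cups/caps, and the vertex cobordisms of Figure~\ref{fig6_008} -- remain valid after tracking flip dots, and that Figures~\ref{fig7_001} and~\ref{fig7_002} provide enough flexibility to sweep dots past every feature that arises. Once this bookkeeping is in place, Arnoux's simplicity theorem closes the argument at once.
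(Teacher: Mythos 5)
Your proof is correct and follows essentially the same route as the paper's primary argument: represent any weighted oriented flipped $1$-foam as the closure of a braid-like foam associated to an element of $G_f$, then invoke Arnoux's simplicity (hence perfectness) theorem and the commutator null-cobordism of Figure~\ref{fig7_003}. The paper also records a second, more elementary proof that avoids perfectness of $G_f$ by splitting off and vanishing flip dots directly (Figures~\ref{fig8_001}--\ref{fig8_003}) and then reducing to the crossing-removal argument of Theorem~\ref{thm_saf}, but your approach matches the first of these.
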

\begin{proof}
  Any  $\R_{>0}$-decorated one-foam with flips $U$ can be represented as the closure $\widehat{U}_0$ of a braid-like foam $U_0$. To $U_0$ there is associated the corresponding element $u_0\in G_f$. Since $G_f$ is perfect, $u_0$ can be represented as a product of commutators, $u_0=\prod_{i=1}^k [v_i,w_i]$. Write $U_0$ as the composition of corresponding one-foams, 
  $U_0=\prod_{i=1}^k [V_i,W_i]$. 
  The foam for each commutator is cobordant to the interval foam, using the argument as in Figure~\ref{fig7_003}. 
  Consequently $[U]=0$ in the cobordism group. 

  \vspace{0.07in}

  The theorem can also be proved directly, without invoking the perfectness of $G_f$. 
  Start with a one-foam $U$, possibly with flips.
  A dot can be split off from the rest of $U$ into an $a$-circle with a dot, see Figure~\ref{fig8_001}.  
  An $a$-circle with a dot is null-cobordant, see Figure~\ref{fig8_002}.

\input{fig8_001}

\input{fig8_002}

\vspace{0.07in} 

Consequently, a 1-foam with flips is cobordant to the same foam without flips, and flips can be removed at any time when constructing a sequence of cobordisms. 
Present foam $U$ as the closure of a braid-like foam $U_0$. 
All crossings in $U_0$ can be split off from a diagram as in Figure~\ref{fig6_012}, along with the flips. 
  
An order of thin edges at a vertex can be reversed by adding three dots, one on each adjacent edge, as shown in relation (1) in Figure~\ref{fig6_005}, with an additional dot added on both sides of the relation on the $(a+b)$-line. Two dots on the $(a+b)$-line on the left hand side can then be cancelled via the Figure~\ref{fig7_001} cobordism. Dots can be split off as well and removed, being null-cobordant (Figure~\ref{fig8_002}). 

\input{fig8_003}

Combination of these moves transforms $U_0$ into a cobordant foam which is a disjoint union of foams $U_{a_i,b_i}$ and a  braid-like foam $U_2$ without crossings, dots, and compatible orders of thin edges at all vertices, see Figure~\ref{fig6_009} on the right. Foam $U_2$ is null-cobordant, as explained earlier. 
Foam $U_{a,b}$ is null-cobordant as well, as shown in Figure~\ref{fig8_003}. Consequently, $U$ is null-cobordant. 
\end{proof} 


\subsection{Foams with a map into a topological space} 
\label{subsec_foams_map}

Consider 1-foams and 2-foams equipped with a continuous map into a topological space $X$. Without loss of generality we can assume that $X$ is a connected CW-complex. One can form the abelian group $\Cob^{1,X}_{\indexw}$ of $\R_{>0}$-decorated oriented one-foams $U$ with a map $\psi:U\lra X$ modulo cobordisms. Two 1-foams as above with maps $\psi_i:U_i\lra X$ are cobordant if there is a 2-foam $F$ with a continuous map $\psi:F\lra X$
such that $\partial (F,\psi)=(U_1,\psi_1)\sqcup (-U_0,\psi_0)$. 
We can assume that $X$ is a path-connected CW-complex. 

Homotopy classes of maps $\psi:U\lra X$ of a one-foam into $X$ depend only on the fundamental group $\pi_1(X)$. Denote this group by $G$ and consider $G$-decorated one-foams, as follows. A decoration consists of finitely many dots on edges of $U$, each dot labelled by an element of $G$. Form the standard model of the classifying space $BG$, take its 2-skeleton and pass to the Poincare dual $P(G)$  of the 2-skeleton. A map of a one-foam to $BG$ can be deformed to a PL map into the one-skeleton of $BG$, which we also denote $\psi:U\lra BG^1$. Here we view a one-skeleton of $BG$ as a subspace of $P(G)$. The inverse image of the one-skeleton of $P(G)$ is then a collection of points on edges of $U$ labelled by elements of $G$. Point labelled $g\in G$ corresponds to intersections of $\psi(U)$ with the one-cell of $P(G)$ labelled $g$. 

A cobordism $F$ between 1-foams $U_1,U_2$ which is a 2-foam with a map $\psi:F\lra X$ can be converted to a PL map into the 2-skeleton $P(G)$, also denoted $\psi$. The inverse image of $P(G)^1$ is then a one-dimensional PL CW-complex in $F$ with labels on edges, with possible singularities as shown in Figure~\ref{fig8_004}.  Edges labelled $1\in G$ can be erased.  

\input{fig8_004}

\begin{prop} \label{prop_X} The cobordism group of oriented $\R_{>0}$-decorated one-foams equipped with a continuous map to a path-connected CW-complex $X$ is given by 
\begin{equation}
    (\R\otimes_{\Z} \rmH_1(X,\Z))\oplus (\R\wedge_{\Q}\R). 
\end{equation}
\end{prop}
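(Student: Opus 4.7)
The plan is to construct two cobordism invariants of $(U,\psi)$, assemble them into an isomorphism, and verify bijectivity by a reduction to standard form following Theorem~\ref{thm_saf}. Without loss of generality I would replace $X$ by a $K(G,1)$-space with $G = \pi_1(X)$, so that homotopy classes of maps from a 1-foam $U$ into $X$ are encoded by dots on edges of $U$ labelled by elements of $G$. Define
\[
\Phi_{\SAF}(U,\psi) \ := \ \nu(U) \ \in \ \R \wedge_\Q \R
\]
by forgetting $\psi$ and applying Theorem~\ref{thm_saf}, and define
\[
\Phi_X(U,\psi) \ := \ \sum_{e \in E(U)} a_e \otimes h_e \ \in \ \R \otimes_\Z \rmH_1(X, \Z),
\]
where $a_e$ is the weight of an edge $e$ and $h_e \in \rmH_1(X,\Z) \cong G^{\mathrm{ab}}$ is the abelianized holonomy of the path $\psi|_e$ computed with respect to a chosen family of paths from a basepoint of $X$ to each vertex of $U$. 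The crucial observation is that changing the chosen path to a vertex $v$ by $g \in G$ alters $\Phi_X$ by a multiple of $[g]$ whose coefficient is the difference of total incoming and outgoing weights at $v$; this vanishes by weight conservation at $v$, so $\Phi_X$ is independent of choices.

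Cobordism invariance of $\Phi_X$ I would establish by a facet-summation argument. Given a cobordism $(F,\Psi)$ from $(U_0,\psi_0)$ to $(U_1,\psi_1)$, for each facet $f$ of $F$ of weight $a_f$ the boundary loop $\Psi(\partial f)$ bounds the 2-chain $\Psi(f)$, hence is null-homologous in $X$, giving $a_f \otimes [\Psi(\partial f)] = 0$. Summing $\sum_f a_f \otimes [\Psi(\partial f)]$ and decomposing each $\partial f$ into contributions from seams and from $\partial F$, the seam contributions cancel: at each seam the two thin facets of weights $b_1, b_2$ and the thick facet of weight $b_1+b_2$ traversed in the opposite direction jointly contribute $((b_1+b_2) - (b_1+b_2))\otimes [\text{seam loop}] = 0$, and an analogous balance cancels the vertex-link contributions. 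What remains equals $\Phi_X(U_1) - \Phi_X(U_0)$, so $\Phi_X$ descends to cobordism classes; combined with Theorem~\ref{thm_saf} this gives a well-defined homomorphism
\[
\Phi \ := \ (\Phi_X, \Phi_{\SAF}) \ : \ \Cob^{1,X}_{\indexw} \lra (\R \otimes_\Z \rmH_1(X,\Z)) \oplus (\R \wedge_\Q \R).
\]

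Surjectivity of $\Phi$ is immediate: $a \otimes [g]$ is realised by the oriented circle of weight $a$ carrying a single dot labelled $g$, and $a \wedge b$ is realised by the foam $U_{a,b}$ of Figure~\ref{fig6_003} equipped with the constant map. For injectivity I would apply the reduction procedure of Theorem~\ref{thm_saf}: present $(U,\psi)$ as the closure of a braid-like foam, split off each crossing as a $U_{a_i, b_i}$ piece, and null-cobord the remaining crossingless braid-like foam; throughout, dots encoding $\psi$ are slid along edges and onto isolated circles by cobordisms in the spirit of Figures~\ref{fig7_001}--\ref{fig8_002}. This reduces $(U,\psi)$ to a disjoint union of undotted pieces $U_{a_i, b_i}$ (handled by Theorem~\ref{thm_saf}) and dotted circles $(c_j, g_j)$. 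For the latter, one verifies the elementary cobordism relations $(c, e) \sim \emptyset$, $(c, g^{-1}) \sim -(c, g)$, saddle concatenation $(c, g_1) \sqcup (c, g_2) \sim (c, g_1 g_2)$, and thickness-splitting $(c_1+c_2, g) \sim (c_1, g) \sqcup (c_2, g)$, which together identify the dotted-circle subgroup with $\R \otimes_\Z \rmH_1(X,\Z)$. The principal obstacle is the thickness-splitting relation: one must build a 2-foam consisting of a thick facet of weight $c_1+c_2$ and two thin facets of weights $c_1, c_2$ meeting along a seam circle, and extend the map to $X$ so that the seam loop and all three boundary circles realize the same class $g \in G$; the annulus topology of each facet forces its two boundary components to agree, which is exactly what makes this cobordism well-defined.
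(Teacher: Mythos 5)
Your proposal matches the paper's approach. The paper defines essentially the same two-component homomorphism (encoding the map to $X$ via $G$-labelled dots coming from the Poincar\'e dual $P(G)$ of the 2-skeleton of $BG$, which gives the same invariant as your edge-holonomy formulation $\Phi_X$ after fixing basepoint paths), and then simply asserts that checking it is an isomorphism is ``straightforward''; your facet-summation argument for cobordism invariance of $\Phi_X$ — with seam contributions cancelling by the weight balance $b_1+b_2 = (b_1+b_2)$ and the path-choice ambiguity killed by weight conservation at vertices — together with your reduction to $U_{a_i,b_i}$ pieces and dotted circles governed by the four elementary relations, supply the omitted verification in a reasonable way that is consistent with the cobordisms already used in the proof of Theorem~\ref{thm_saf}.
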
 

Note that, if $\rmH_1(X,\Z)$ is torsion, the first term vanishes. 

\begin{proof}
Denote by $[g]$ the image of $g\in G$ in $\mathrm{H}_1(G)=\mathrm{H}_1(X,\Z)$ and define a map 
    \begin{equation}
        \gamma_1\:\ \Cob^{1,X}_{\R_{>0}} \lra \R\otimes_{\Z} \rmH_1(X,\Z)
    \end{equation}
    by sending  
    a $G$-labelled oriented $1$-foam $U$ to the sum 
    \begin{equation}
        \gamma_1(U) \ := \  \sum_{i} a_i \otimes [g_i],
    \end{equation}
    where the sum is over all labels $g_i\in G$ on $U$ and $a_i$ is the thickness of the edge which contains $g_i$. 
    It is straightforward to see that $\gamma_1(U)$ depends only on the cobordism class of $U$ in $\Cob^{1,X}_{\R_{>0}}$, so it is indeed a well-defined map on cobordism classes $[U] $. Define the homomorphism
    \begin{equation}
        \gamma \ : \ \Cob^{1,X}_{\R_{>0}}\lra (\R\otimes_{\Z} \rmH_1(X,\Z))\oplus (\R\wedge_{\Q}\R), \ \ \gamma([U]) = (\gamma_1([U]), \mathsf{SAF}(U)). 
    \end{equation}
It is then straightforward to check that $\gamma$ is an isomorphism of groups. 
\end{proof}


\subsection{Other variations}
\label{subsec_other}

\begin{remark}
D.~Sullivan~\cite{Su,Ver14} has proved that any oriented one-dimensional solenoidal manifold $M$ is the boundary of an oriented solenoidal surface. The idea (tributing his much earlier conversation with B.~Edwards) is to represent $M$ as the closure of a braid-like structure, that is, as the mapping torus of a homeomorphism of the Cantor set. Sullivan then uses R.~D.~Anderson's theorem~\cite{And58} that the homeomorphism group of the Cantor set is simple and, in particular, perfect. Representing the homeomorphism as a product of commutators allows to realize $M$ as the boundary, schematically shown in Figure~\ref{fig7_003}.  
 This use of braid-like closures is analogous to that in the proofs of Theorems~\ref{thm_saf},~\ref{thm_with_flips}, where an oriented weighted 1-foam is represented as the closure of a braid 1-foam. It is likely that Sullivan's result can be interpreted as vanishing of $K_1$ of a suitable assembler category~\cite{Za1,Za2}, 
where the assembler structure is that of coverings of the Cantor set. 
\end{remark}

\begin{remark}
    The $\SAF$ invariant can be generalized to the Kenyon--Smillie invariant~\cite{KS00,Cal04}, and it is an interesting question to interpret the latter via suitably decorated foam cobordisms. 
\end{remark}

\begin{remark}
O.~Lacourte~\cite{La}  defines a version of interval exchange transformations for each subgroup $\Gamma\subset \R/\Z$, via the corresponding subgroup $\IET(\Gamma)$ of the group $\Aut_{\IET}$. He establishes an isomorphism between $H_1(\IET(\Gamma))$ and the second skew-symmetric power of $\widetilde{\Gamma}$ over $\Z$, where $\widetilde{\Gamma}$ is the preimage of $\Gamma$ in $\R$. It is straightforward to extend the results of Section~\ref{sec_iet} to interpret the above first homology group as the group of foam cobordisms, with facets of foams carrying weights in $\widetilde{\Gamma}\cap \R_{>0}$ (and see Remark~\ref{remark_H}). 

Lacorte also considers the group of $\IET$s with flips. This group is known to be perfect, and Theorem~\ref{thm_with_flips} is a foam interpretation of this result. Lacourte shows that subgroups $\IET(\Gamma)$ with flips modulo the commutator may have 2-torsion, and there should be an analogue of this result for foam cobordism.
\end{remark}



\bibliographystyle{amsalpha} 
\bibliography{k_theory}

\end{document}